\documentclass[11pt]{article}

\usepackage[utf8]{inputenc}
\usepackage{lmodern}
\usepackage[normalem]{ulem}
\usepackage{float}
\usepackage{geometry}
\usepackage{enumitem}
\usepackage[utf8]{inputenc}
\usepackage{tocloft}
\usepackage{graphicx, titlesec}
\usepackage{amscd,amsmath,amsthm,amssymb,amsfonts}
\usepackage{verbatim}
\usepackage[all]{xy}
\usepackage{color}
\usepackage[colorlinks]{hyperref}
\usepackage{booktabs}
\usepackage{tikz-cd}
\usetikzlibrary{patterns}
\usepackage{ytableau}
\usepackage{booktabs}
\usepackage{cleveref}
\usepackage{natbib}

\usepackage{algorithm}
\usepackage[noend]{algpseudocode}
\usepackage{algorithmicx}
\usepackage{algorithm}
\usepackage{algpseudocode}

\MakeRobust{\Call}

\algdef{SE}[DOWHILE]{Do}{DoWhile}{\algorithmicdo}[1]{\algorithmicwhile\ #1}
\algrenewcommand{\algorithmiccomment}[1]{\hfill $\rhd$ \emph{#1}}
\algrenewcommand{\algorithmicrequire}{\textbf{Input:}}
\algrenewcommand{\algorithmicensure}{\textbf{Output:}}
\algnewcommand{\Or}{\textbf{or}}
\algnewcommand{\And}{\textbf{and}}
\algnewcommand{\Not}{\textbf{not}\,}
\algnewcommand\algorithmicforeach{\textbf{for each}}
\algdef{S}[FOR]{ForEach}[1]{\algorithmicforeach\ #1\ \algorithmicdo}

\DeclareMathOperator{\Gr}{Gr}

\DeclareMathOperator{\rank}{rk}

\DeclareMathOperator{\sign}{sign}

\DeclareMathOperator{\rowspan}{rowspan}
\DeclareMathOperator{\Id}{Id}

\DeclareMathOperator{\alt}{alt}
\DeclareMathOperator{\OGr}{OGr}
\DeclareMathOperator{\SGr}{SGr}

\DeclareMathOperator{\diag}{diag}
\DeclareMathOperator{\sd}{sd}
\DeclareMathOperator{\GL}{GL}
\DeclareMathOperator{\PGL}{PGL}

\DeclareMathOperator{\SO}{SO}

\newcommand{\mM}{\mathcal{M}}

\newcommand{\mB}{\mathcal{B}}

\newcommand{\mS}{\mathcal{S}}
\newcommand{\mR}{\mathcal{R}}
\newcommand{\mU}{\mathcal{U}}

\newcommand{\mV}{\mathcal{V}}
\newcommand{\rk}{\textrm{rk }}

\newcommand{\RR}{\mathbb{R}}
\newcommand{\CC}{\mathbb{C}}
\newcommand{\PP}{\mathbb{P}}
\newcommand{\KK}{\mathbb{K}}
\newcommand{\bu}{\mathbf{u}}

\newcommand{\sslash}{\mathbin{/\mkern-6mu/}}
\newcommand{\lsslash}{\mathbin{\setminus\mkern-6mu\setminus}}

\newtheorem{theorem}{Theorem}[section]
\theoremstyle{definition}
\newtheorem{definition}[theorem]{Definition}

\newtheorem{example}[theorem]{Example}
\newtheorem{remark}[theorem]{Remark}
\newtheorem{question}[theorem]{Question}
\theoremstyle{plain}
\newtheorem{lemma}[theorem]{Lemma}
\newtheorem{proposition}[theorem]{Proposition}
\newtheorem{corollary}[theorem]{Corollary}

\newenvironment{customthm}[1]
{\innercustomthm}
{\endinnercustomthm}

\usepackage{listings}
\lstdefinelanguage{Julia}%
  {morekeywords={abstract,break,case,begin,catch,const,continue,do,else,elseif,%
      end,export,false,for,function,immutable,import,importall,if,%in,%
      macro,module,otherwise,quote,return,switch,true,try,type,typealias,%
      using,while},%
   sensitive=true,%
   alsoother={\$},%
   morecomment=[l]\#,%
   morecomment=[n]{\#=}{=\#},%
   morestring=[s]{"}{"},%
   morestring=[m]{'}{'},%
}[keywords,comments,strings]
\lstset{language=julia,
	basicstyle=\footnotesize\ttfamily,
	keywordstyle=\color{blue}\ttfamily,
	stringstyle=\color{red}\ttfamily,
	commentstyle=\color{green}\ttfamily,
	morecomment=[l][\color{magenta}]{\#},
	breaklines=true
}

\title{The Self-Projecting Grassmannian}
\author{Alheydis Geiger\thanks{Max Planck Institute for Mathematics in the Sciences, Leipzig \hfill {\tt geiger@mis.mpg.de}} \, and Francesca Zaffalon\footnotemark[1]\;\,\thanks{Weizmann Institute of Science \hfill {\tt zaffalon@mis.mpg.de}}}
\date{}

\begin{document}

\maketitle

\begin{abstract}
We introduce the self-projecting Grassmannian, an irreducible subvariety of the Grassmannian parametrizing linear subspaces that satisfy a generalized self-duality condition. We study its relation to classical moduli spaces, such as the moduli spaces of pointed curves of genus $g$, as well as to other natural subvarieties of the Grassmannian. We further translate the self-projectivity condition in the combinatorial language of matroids, introducing self-projecting matroids, and we computationally investigate their realization spaces inside the self-projecting Grassmannian.
\end{abstract}

\section{Introduction}

The Grassmannian is the point of entry to moduli spaces. It parameterizes $k$-dimensional vector subspaces of $\KK^n$, or configurations of $n$ points in $\PP^{k-1}$, modulo $\PGL$-action. 
Subvarieties of the Grassmannian encode additional constraints on such point configurations. One important example is self-duality, where the set of points satisfies Gale duality. 
These were studied, among others, by Dolgachev--Ortland, Coble, Petrakiev and Eisenbud--Popescu \cite{dolgachev1988point, coble, petrakiev, eisenbud2000projective}. The subvariety of such point configurations inside the Grassmannian is the self-dual Grassmannian~\cite{GHSV24}. 
Formally, a point $V\in \Gr(k,2k)$ is said to be self-dual if $V$ is an element of the $(\KK^*)^{2k}$ torus orbit of $V^{\perp}$.

In this paper, we generalize self-dual point configurations to {\em self-projecting point configurations}. These describe points $V\in \Gr(k,n)$ such that $V$ is {\em contained} in an element of the $(\KK^*)^n$ torus orbit of $V^{\perp}$. 
We introduce the subvariety of the Grassmannian parameterizing such point configurations: the {\em self-projecting Grassmannian}. It is defined as the Zariski closure of the space of $V\in \Gr(k,n)$, represented by a $k\times n$ matrix $X$, such that
\begin{equation}
    \exists \text{ diagonal full-rank matrix } \Lambda \text{ such that } X\cdot \Lambda \cdot X^t = 0. \label{selfproj}
\end{equation}
\begin{customthm}{\ref{thm : irreducible}}
   If $2k\leq n \leq \binom{k+1}{2}$, the~self-projecting~Grassmannian~$\SGr(k,n)\subseteq\Gr(k,n)$ is irreducible of dimension $k(n-k)-\binom{k+1}{2}+n-1$.
    For $n < 2k,$ we have $\SGr(k,n) = \emptyset$ and for $n > \binom{k+1}{2}$ we have $\SGr(k,n)=\Gr(k,n)$.
\end{customthm}
In particular, each self-projecting point configuration in $\Gr(k,n)$ lies in some orthogonal Grassmannian $\OGr^{\Lambda}(k,n)$, defined by a diagonal bilinear form $\Lambda$. 
Therefore, the self-projecting Grassmannian encodes in a single object all vector subspaces which are isotropic with respect to some non-degenerate diagonal bilinear form; see~\Cref{lemma : irreducible}.
Orthogonal Grassmannians, and their totally positive parts, play an important role in applications, where they appear, among others, in the study of scattering amplitudes in ABJM theory~\cite{huang2014abjm,huang2014positive}, in Ising models~\cite{Galashin_2020} and in cosmology~\cite{MR2607588}.

The self-projectivity condition on a space $V\in \Gr(k,n)$, represented by a $k\times n$ matrix~$X$, can be written as 
\[ 
    \exists \; \lambda\in (\KK^*)^n \text{ such that } \nu(X)\cdot \lambda = 0, 
\]
where $\nu(X)$ is obtained by applying the second Veronese embedding $\nu$ to each column of~$X$. 
This definition highlights the relation to the characterization of self-duality via intersection of quadrics, and generalizes it in a natural way. 
Moreover, it reveals a natural connection to the parameter space $X_{r,d^m,n}$ of point configurations on intersections of hypersurfaces, as studied in~\cite{caminata2023determinantal}. 

Whenever a family of genus $g$ curves can be generically written as the intersection of quadric surfaces in the correct embedding, the corresponding moduli space $\mM_{g,n}$ will be birational to a self-projecting Grassmannian.

\begin{customthm}{\ref{thm : X49} and \ref{thm : X513}}
    The moduli space $\mM_{1,10}$ of genus $1$ curves with $10$ marking and the self-projecting configuration space $\SGr(4,9)^\circ/(\CC^*)^9$, as well as $\mM_{5,13}$ and $\SGr(5,13)^\circ/(\CC^*)^{13}$, are birationally equivalent.
\end{customthm} 

Self-duality, in the classical $n=2k$ case, has a clear combinatorial translation to the concept of (identically) self-dual matroids. We introduce the class of {\em self-projecting matroids} encoding the combinatorial constraints that need to hold in a self-projecting vector space. For small values of $k,n$ we compute the number of such matroids, in~\Cref{tab:matroids}. 

Realization spaces of self-dual matroids inside the self-dual Grassmannian were studied in~\cite{GHSV24}. They show that there exist realizable self-dual matroids that do not admit a self-dual realization, where the first example is a rank $5$ matroid on $10$ elements. Generalizing this study, we compute self-projecting realization spaces $\mS(\mM)$ of self-projecting matroids $\mM$ and compare with their realization spaces $\mathcal{R(\mM)}$.

\begin{customthm}{\ref{thm : rank2}, \ref{thm : realization space rank 3}~and~\ref{thm : realization space rank 4}}
    For all self-projecting matroids $\mM$ of rank~$2$, we have $\mS(\mM) = \mR(\mM)$.

    The variety $\mS(\mU_{3,6})$ is a codimension $1$ subvariety of $\mR(\mU_{3,6})$. For all but $4$ other self-projecting matroids of rank $3$ on up to $8$ elements, $\mS(\mM)=\mR(\mM)$.

    Out of the $7181$ realizable self-projecting matroids of rank $4$ on $9$ elements there are at least $174$ for which $\mS(\mM) = \mR(\mM)\neq \emptyset$, while for at least $5400$ we have $\mS(\mM)\subsetneq\mR(\mM)$. For at least $2844$ realizable, self-projecting matroids, $\emptyset \neq \mS(\mM) \subsetneq \mR(\mM)$. 
\end{customthm}

Our computations are made available according to the FAIR data principles broadcasted by \href{https://www.mardi4nfdi.de/about/mission}{MaRDI}~\cite{Mardi}, which aim to make research data Findable, Accessible, Interoperable, and  Reusable.
To guarantee transparent and verifiable results, the computed realization spaces are stored 
using the new \texttt{.mrdi} file format \cite{fairfileformat}, designed to store and share results in computer algebra without losing accuracy. They can be found at
\begin{center}\url{https://github.com/AlheydisGeiger/selfprojectingGrassmannian}
\end{center} 
The code written in \texttt{OSCAR} will be incorporated into the experimental section of the general \texttt{OSCAR}~\cite{OSCAR,OSCAR-book} distribution in the near future, together with a collection for the newly emerging database \texttt{oscarDB}. See \Cref{sec : computations} for further details on the computations.

The database collection also includes the results from \cite{GHSV24} in a more accessible way, as well as producing the infrastructure to store and load (self-projecting) realization spaces of matroids. This framework makes it possible, for example, to test these spaces for smoothness. This is a topic of interest as Mn\"ev's universality theorem shows that the algebraic structure of matroid realization spaces can be as bad as possible~\cite{mnev}. However,~\cite{CL25} shows that small matroid realization spaces are smooth. 

\medskip

\noindent\textbf{Outline.} The paper is structured as follows. After recalling the definitions, we define the self-projecting Grassmannian in~\Cref{sec : definitions} and study its relation to other well-studied varieties: the orthogonal Grassmannian $\OGr(k,n)$ in~\Cref{sec : orthogonal}, its quotient modulo torus action $X(k,n)^{\sd}$ in~\Cref{sec : point configuration} and its relation to parameter spaces of points lying on the intersection of certain hypersurfaces, as introduced in \cite{caminata2023determinantal}, in~\Cref{sec : point conf hypersurfaces}. 
In~\Cref{sec : geom int} we give a geometric interpretation of some small self-projecting Grassmannians. In particular we study the smallest new example of self-projecting Grassmannian $\SGr(4,9)$ and its relation to the moduli space of marked elliptic curves in~\Cref{sec : X49}. In~\Cref{sec : X410} we remark that $\SGr(4,10)$ is a codimension $1$ subvariety parametrizing point configurations at the center of the Bruxelles' problem. In~\Cref{sec : X513} we study the relation between $\SGr(5,13)$ and the moduli space of genus $5$ curves, with $13$ marked points. 
In~\Cref{sec : 4 matroids}, we turn our attention to matroids that can arise from self-projecting points and introduce the definition of self-projecting matroids in~\Cref{sec : sp matroids}. We computationally study the realization spaces of self-projecting matroids inside the self-projecting Grassmannian in~\Cref{sec : sp realizaitons}. 
In~\Cref{sec : reality and positivity} we consider the real and positive part of the self-projecting Grassmannian. We conclude in~\Cref{sec : sp positroids} with some remarks on self-projecting positroids showing they are not always realizable in the totally non-negative self-projecting Grassmannian. 

\noindent\textbf{Acknowledgments.} 
The authors thank Ben Hollering, Michael Joswig, Nathan Pflueger, Luca Schaffler, Rainer Sinn, and Bernd Sturmfels for insightful discussions. Thanks for help with setting up the \texttt{OSCAR} code and database collection go to Antony Della Vecchia. The first author was funded by the Deutsche Forschungsgemeinschaft (DFG, German Research Foundation) – project number 539846931, within the SPP 2458~\emph{Combinatorial Synergies}.

\section{The self-projecting Grassmannian}\label{sec : 2}

In this section we introduce the central object of study, the self-projecting Grassmannian, and investigate its relation to other parameter spaces of point configurations.

The Grassmannian $\Gr(k,n)$ is the set of $k$-dimensional subspaces of an $n$-dimensional vector space over a field $\KK$. 
It can be embedded into projective space via the {\em dual Pl\"ucker embedding}, mapping a $k$-dimensional vector space $V$ to the vector of maximal minors $(q_I(V))_{I\in \binom{[n]}{k}}$ of a $k\times n$ matrix $M$ with $V=\rowspan(M)$. By abuse of notation, we denote the matrix $M$ also by $V$.

\subsection{Definitions}\label{sec : definitions}

The self-dual Grassmannian, as introduced in~\cite{GHSV24}, is the subvariety of the Grassmannian $\Gr(k,2k)$ defined as the Zariski closure of the set $\SGr^\bullet(k,2k)$  of all vector spaces $V\in \Gr(k,2k)$ such that 
\begin{equation} 
    q_{[n]\setminus I}(V) = \lambda_I \sign(I) q_I(V), \quad \text{ for all } I \in \tbinom{[2k]}{k},\label{eq : selfdual} 
\end{equation}
where $\lambda_I = \prod_{i\in I}\lambda_i$.
An element $V \in \SGr^\bullet(k,2k)$ is said to be \emph{self-dual}. Note that for such an element $V$ and the corresponding choice of $\lambda \in (\KK^*)^{2k}$, we have that $V^\perp = \lambda \cdot V$ or, equivalently, $V\cdot \diag(\lambda)\cdot V^t = 0$.

\begin{definition}\label{def : self-proj}
    A subspace $V\in \Gr(k,n)$ is {\em self-projecting} if there exists $\lambda \in (\KK^*)^n$ such that $V\cdot \diag(\lambda)\cdot V^t = 0$.
    Denote by $\SGr^\bullet(k,n)$ the set of self-projecting points in $\Gr(k,n)$.
    The {\em self-projecting Grassmannian} $\SGr(k,n)$ is the Zariski closure of $\SGr^\bullet(k,n)$ inside the Grassmannian $\Gr(k,n)$.
\end{definition}
Since the set of self-projecting points is not empty if and only if $n\geq 2k$, from now on we assume $n\geq 2k$. Geometrically, a subspace $V$ is self-projecting if it is contained in some element of the orbit of the $(\KK^*)^n$ torus action on its orthogonal complement $V^\perp$. 

Equivalently, a space is self-projecting, if there exists some $\lambda\in (\KK^*)^n$ satisfying a system of linear equations as follows.
Let $\nu: \KK^\ell \to \KK^{\binom{\ell+1}{2}}$ be the second Veronese embedding.
Given an $\ell\times m$ matrix $M$, we denote by 
$\nu(M) = \begin{pmatrix} \nu(v_1) \cdots \nu(v_m) \end{pmatrix}$ 
the $\binom{\ell+1}{2}\times m$ matrix obtained by applying the Veronese embedding to each column of $M$. We refer to this as the \emph{multi-Veronese matrix} of $M$. Given an element $V\in \Gr(k,n)$ represented by a $k\times n$ matrix $M_V$, we can consider the $\binom{k+1}{2}\times n$ matrix $\nu(M_V)$. Then $V$ is self-projecting if and only if there exists a solution $\lambda\in (\KK^*)^n$ to
\[ 
    \nu(M_V)\cdot \lambda = 0. 
\]
The above construction is independent of the choice of representative of the point $V$. Indeed, suppose $M_W$ is another $k\times n$ matrix representing $V$. Then there exists some $G\in \GL(k)$ such that $M_W = G\cdot M_V$. Consider the second symmetric power $G^{[2]}$ of $G$. 
By construction, $\nu(M_W) = G^{[2]}\cdot \nu(M_V)$. The  matrix $G^{[2]}$ is invertible, and its inverse is $(G^{-1})^{[2]}$.
Hence, the system of equations defined by $\nu(M_V)$ does not depend on the choice of the representative for $V$. Therefore, we will write $\nu(V)$ for $\nu(M_V)$.

\begin{proposition}\label{prop : ideal}
    The self-projecting Grassmannian can be defined in dual Stiefel coordinates as the variety $\mathcal{V}(I^{\sd}_{k,n})$ with
    \begin{equation}
        I^{\sd}_{k,n} = \left(\langle \nu(X)\cdot \lambda \rangle: \langle \lambda_1\cdot\ldots\cdot \lambda_n \rangle^\infty\right) \cap \KK[x_{(1,1)}, \ldots,x_{(k,n)}], \label{eq: defining ideal}
    \end{equation}
    where $X$ denotes the matrix with entries $X_{ij} = x_{(i,j)}$. 
\end{proposition}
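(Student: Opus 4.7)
My plan is to realize the preimage of $\SGr(k,n)$ in dual Stiefel coordinates as the Zariski closure of the image of an explicit incidence variety under the projection that forgets $\lambda$, and then to apply the standard dictionary relating saturation to closure outside a divisor and elimination to projection.

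First I would set up the incidence variety
\[
Z := \mathcal{V}\bigl(\langle \nu(X)\cdot \lambda\rangle\bigr) \;\subseteq\; \mathbb{A}^{kn}_X \times \mathbb{A}^n_{\lambda},
\]
together with its ``honest'' locus $Z^\circ := Z \cap \bigl(\mathbb{A}^{kn}\times (\KK^*)^n\bigr)$, and the projection $\pi\colon \mathbb{A}^{kn+n}\to\mathbb{A}^{kn}$ onto the $X$-coordinates. By~\Cref{def : self-proj} together with the reformulation $\nu(V)\cdot\lambda=0$ recorded just above the proposition, a full-rank $k\times n$ matrix $X$ belongs to $\pi(Z^\circ)$ exactly when its rowspan lies in $\SGr^\bullet(k,n)$; hence the preimage of $\SGr(k,n)$ in dual Stiefel coordinates is the Zariski closure in $\mathbb{A}^{kn}$ of the full-rank part of $\pi(Z^\circ)$.

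Next I would invoke two standard facts. By the saturation theorem, $\langle \nu(X)\cdot\lambda\rangle : \langle \lambda_1\cdots\lambda_n\rangle^\infty$ defines $\overline{Z^\circ}$ in $\mathbb{A}^{kn+n}$, since saturating by $\lambda_1\cdots\lambda_n$ removes precisely the primary components of $Z$ supported in the divisor $\{\lambda_1\cdots\lambda_n=0\}$. By the elimination theorem, intersecting this saturated ideal with $\KK[x_{(1,1)},\ldots,x_{(k,n)}]$ yields the vanishing ideal of $\overline{\pi(\overline{Z^\circ})}=\overline{\pi(Z^\circ)}$ in $\mathbb{A}^{kn}$. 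Together these identify $\mathcal{V}(I^{\sd}_{k,n})=\overline{\pi(Z^\circ)}$.

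The step that requires the most care is the final identification of $\overline{\pi(Z^\circ)}$ with the affine cone over $\SGr(k,n)$ in dual Stiefel coordinates: a priori the Zariski closure in $\mathbb{A}^{kn}$ could acquire spurious components supported entirely in the rank-deficient locus. This is not an issue under the standing assumption $n\geq 2k$, because then $\pi(Z^\circ)$ genuinely meets the open dense full-rank stratum, so intersecting $\overline{\pi(Z^\circ)}$ with that stratum and passing to the quotient by $\GL(k)$ recovers $\SGr(k,n)$ by definition of Zariski closure.
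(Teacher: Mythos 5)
Your argument is correct and follows exactly the route the paper leaves implicit: the reformulation of self-projectivity as the existence of a torus solution to $\nu(X)\cdot\lambda=0$, combined with the standard facts that saturating by $\lambda_1\cdots\lambda_n$ cuts out the closure of the locus where $\lambda\in(\KK^*)^n$ and that eliminating the $\lambda$-variables computes the closure of the projection. The paper states the proposition without a written proof, and your account --- including the closing observation that any components of $\overline{\pi(Z^\circ)}$ hiding in the rank-deficient locus are irrelevant once one restricts to full-rank matrices and passes to the Grassmannian --- supplies precisely the intended justification.
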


In order to introduce the ideal of the self-projecting Grassmannian in dual Pl\"ucker coordinates, we define an auxiliary matrix.
\begin{definition}\label{def : cocircuit matrix}
    The {\em cocircuit matrix} $D_{k,n}$ is the $\binom{n}{k-1}\times n$ matrix with 
    \[ 
        (D_{k,n})_{I,j} = \begin{cases} \sign(I,j)q_{I\cup j} \quad & \text{ if } j\not\in I\\
        0 & \text{ else}
        \end{cases} \qquad \text{for } I\in \tbinom{[n]}{k-1},j\in [n].
    \]
\end{definition}

It was shown in~\cite[Remark 2.3]{maazouz2024positive} that given a diagonal bilinear form  represented by a matrix $\Lambda$, a point $V\in \Gr(k,n)$ is isotropic with respect to that bilinear form if and only if $D_{k,n}(V)\cdot \Lambda \cdot D_{k,n}(V)^t = 0$. The matrix $D_{k,n}$ has rank $k$,~\cite[Remark 2.2]{devriendt2025two}.
Hence, a point $V\in \Gr(k,n)$ is self-projecting if and only if there is a solution in the algebraic torus $(\KK^*)^n$ to the system of linear equations
\[ 
    \nu(D_{k,n}(V)) \cdot \lambda = 0. 
\] 

\begin{corollary}
    In dual Plücker coordinates, the self-projecting Grassmannian $\SGr(k,n)$ is defined as the variety of the ideal
    \begin{align*}
        J^{\sd}_{k,n}:= \left(\left\langle \nu(D_{k,n}) \cdot \lambda\right\rangle : \langle \lambda_1\cdots\lambda_n \rangle ^\infty\right)\cap \KK\left[q_I\mid I \in \tbinom{[n]}{k}\right] +I_{k,n},
    \end{align*} 
    where $I_{k,n}$ is the ideal of Plücker relations defining the Grassmannian $\Gr(k,n)$.
\end{corollary}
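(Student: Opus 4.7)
The plan is to run the same argument as in \Cref{prop : ideal}, with the Stiefel matrix $X$ replaced by the cocircuit matrix $D_{k,n}$, and then cut down to the Grassmannian using the Plücker relations. By the discussion immediately preceding the statement, a point $V\in\Gr(k,n)$ is self-projecting if and only if there exists $\lambda\in(\KK^*)^n$ with $\nu(D_{k,n}(V))\cdot\lambda=0$. Since the entries of $D_{k,n}$ are (signed) Plücker coordinates $q_{I\cup j}$ and $\nu$ acts coordinate-wise by monomials of degree two, the condition $\nu(D_{k,n})\cdot\lambda=0$ becomes a family of polynomial equations in the ring $\KK[q_I\mid I\in\binom{[n]}{k}][\lambda_1,\ldots,\lambda_n]$.

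With this equivalence in hand, the set of Plücker vectors of self-projecting subspaces is the image, under the projection forgetting $\lambda$, of the quasi-affine variety cut out by $\nu(D_{k,n})\cdot\lambda=0$ together with $\lambda_1\cdots\lambda_n\neq 0$. On the ideal side, this is computed in two standard steps: saturating $\langle\nu(D_{k,n})\cdot\lambda\rangle$ at $\lambda_1\cdots\lambda_n$ removes the components supported on some coordinate hyperplane $\{\lambda_i=0\}$, and intersecting the result with $\KK[q_I\mid I\in\binom{[n]}{k}]$ implements the elimination of the $\lambda$ variables. This produces exactly the first summand of $J^{\sd}_{k,n}$.

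So far the construction has been carried out in the Plücker ambient projective space. To land inside the Grassmannian we intersect with $\Gr(k,n)$, which on the level of ideals is accomplished by adding $I_{k,n}$. Since $\Gr(k,n)$ is closed in $\PP^{\binom{n}{k}-1}$, Zariski closure in the ambient space intersected with $\Gr(k,n)$ agrees with Zariski closure in $\Gr(k,n)$, and we conclude $\SGr(k,n)=\mathcal{V}(J^{\sd}_{k,n})$.

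The only delicate point is that the saturation-and-elimination step must yield the vanishing ideal of the Zariski closure of the projection rather than a strictly larger locus. Over an algebraically closed field, standard elimination theory guarantees this provided we first discard the fibers over $\{\lambda_i=0\}$, which is exactly the role of the saturation at $\lambda_1\cdots\lambda_n$; the same subtlety already appears in \Cref{prop : ideal}, so no new obstacle arises here.
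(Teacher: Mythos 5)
Your proposal is correct and follows essentially the same route as the paper: the paper gives no separate proof of this corollary, deriving it immediately from the preceding observation (via \cite[Remark 2.3]{maazouz2024positive} and the rank of $D_{k,n}$) that $V$ is self-projecting iff $\nu(D_{k,n}(V))\cdot\lambda=0$ has a torus solution, combined with the saturate-then-eliminate argument already used for \Cref{prop : ideal}, and finally adding the Pl\"ucker relations $I_{k,n}$ to restrict to $\Gr(k,n)$. Your write-up makes exactly these steps explicit, so no substantive difference.
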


For any self-projecting point $V\in \SGr(k,n)$ the matrix $\nu(V)$ has rank at most $n-1$. As shown  in examples in~\cite{caminata2023determinantal}, the ideal generated by the $n$-minors of $\nu(X)$ inside $\KK[x_{i,j}]$ might have many irreducible components. However, by~\Cref{thm : irreducible} the self-projecting Grassmannian is an irreducible variety.

\begin{proposition}\label{prop : relation with minors}
    Let $2k\leq n \leq \binom{k+1}{2}$. The self-projecting Grassmannian $\SGr(k,n)$ is equal to the top-dimensional irreducible component of the vanishing locus of the ideal $I_n$ of $n$-minors of $\nu(X)$ in dual Stiefel coordinates.
    
    In dual Pl\"ucker coordinates it is equal to the top-dimensional irreducible component of the vanishing locus of the ideal $J_n$ of $n$-minors of $\nu(D_{k,n}|_{[k]})$, where $D_{k,n}|_{[k]}$ denotes the submatrix of $D_{k,n}$ given by the first $k$ rows.
\end{proposition}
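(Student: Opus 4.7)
The plan is to first establish the inclusion $\SGr(k,n)\subseteq\mathcal{V}(I_n)$ and then identify $\SGr(k,n)$ as the unique top-dimensional irreducible component of $\mathcal{V}(I_n)$ via an incidence-variety stratification. The inclusion is immediate from the definition: on the dense open $\SGr^{\bullet}(k,n)$, the witnessing torus vector $\lambda$ lies in $\ker\nu(X)$, so $\rank\nu(X)<n$ and all $n\times n$ minors of $\nu(X)$ vanish; Zariski closure preserves vanishing.

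For the converse direction I would introduce the incidence variety
\[
   Z \;=\; \bigl\{(X,[\lambda])\in\mathrm{Mat}_{k\times n}(\KK)\times\PP^{n-1} \,:\, \nu(X)\lambda=0\bigr\}
\]
together with its projection $\pi$ to $\mathrm{Mat}_{k\times n}(\KK)$; since $\PP^{n-1}$ is projective, $\pi(Z)$ is closed and agrees with $\mathcal{V}(I_n)$. I would then stratify $Z$ by the support $S=\mathrm{supp}(\lambda)\subseteq[n]$, writing $Z_S$ for the locally closed piece where the support equals $S$. Since $\mathcal{V}(I_n)=\bigcup_{S}\overline{\pi(Z_S)}$, every irreducible component of $\mathcal{V}(I_n)$ is contained in some $\overline{\pi(Z_S)}$.

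The heart of the argument is a dimension count for each stratum. Because $\nu(X)\lambda=0$ with $\mathrm{supp}(\lambda)=S$ only constrains $X|_S$ to be a Stiefel lift of a point in $\SGr(k,|S|)$, while $X|_{[n]\setminus S}$ is free, the strata with $|S|<2k$ are empty, and for $2k\leq|S|\leq n\leq\binom{k+1}{2}$ the dimension formula of \Cref{thm : irreducible} applied to size $|S|$ gives
\[
  \dim\overline{\pi(Z_S)} \;=\; \bigl(k|S|-\tbinom{k+1}{2}+|S|-1\bigr) + k(n-|S|) \;=\; kn-\tbinom{k+1}{2}+|S|-1.
\]
This quantity is strictly maximized at $|S|=n$, where it matches the Stiefel dimension of $\SGr(k,n)$. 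Since $\pi(Z_{[n]})\supseteq\SGr^{\bullet}(k,n)$ with Zariski closure $\SGr(k,n)$, and $\SGr(k,n)$ is irreducible by \Cref{thm : irreducible}, we conclude that $\SGr(k,n)=\overline{\pi(Z_{[n]})}$ is the unique top-dimensional irreducible component of $\mathcal{V}(I_n)$.

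The Pl\"ucker version runs on the same template with $D_{k,n}|_{[k]}$ playing the role of $X$: since $D_{k,n}$ has rank $k$, its first $k$ rows span the row space, and self-projectivity of $V$ is equivalent to the existence of a torus vector $\lambda$ with $\nu(D_{k,n}(V)|_{[k]})\cdot\lambda=0$; the entries of $D_{k,n}$ are linear in the Pl\"ucker coordinates, so the incidence-and-stratification argument transports word for word into $\KK[q_I]/I_{k,n}$. The main obstacle will be the dimension count for the intermediate strata: one must check that each $\overline{\pi(Z_S)}$ really decomposes with the expected ``product'' dimension, which requires reapplying \Cref{thm : irreducible} on the smaller size $|S|$ and verifying that the hypothesis $2k\leq|S|\leq\binom{k+1}{2}$ is in force at every non-empty stratum — automatic given $|S|\leq n\leq\binom{k+1}{2}$ and the vacuity of $Z_S$ when $|S|<2k$.
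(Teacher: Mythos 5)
Your opening moves are sound: $\SGr(k,n)\subseteq\mV(I_n)$ is immediate, $\pi(Z)=\mV(I_n)$ holds because the fibers are projective, and the dimension formula $kn-\binom{k+1}{2}+|S|-1$ is correct \emph{for the part of $\overline{\pi(Z_S)}$ where $X|_S$ has rank $k$}. The gap is in the remaining strata. The claim that ``the strata with $|S|<2k$ are empty'' is false: $\nu(X)\lambda=0$ with $\mathrm{supp}(\lambda)=S$ does not constrain $X|_S$ to be a Stiefel lift of a point of $\SGr(k,|S|)$, because $X|_S$ need not have rank $k$. Already $Z_{\{j\}}$ is nonempty (it contains every $X$ with $j$-th column zero), and in general, when $\rank(X|_S)=r<k$ the condition is that the $r$-dimensional row span of $X|_S$ lies in $\SGr^\bullet(r,|S|)$, which only needs $|S|\geq 2r$. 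None of these rank-deficient loci appear in your dimension count, so the asserted strict maximality at $|S|=n$ is unproved.

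This omission is fatal to the argument as written, not merely a missing routine check. Take $(k,n)=(5,10)$ and let $L\subseteq\mathrm{Mat}_{5\times 10}$ be the locus where columns $1$ and $2$ are proportional. Then $\nu(X)$ has two proportional columns, so every $10$-minor of $\nu(X)$ vanishes and $L\subseteq\mV(I_{10})$; but $L$ has codimension $k-1=4$, hence dimension $46$, whereas the cone over $\SGr(5,10)$ has dimension $k^2+\dim\SGr(5,10)=25+19=44$. So $\mV(I_n)$ contains an irreducible subvariety of strictly larger dimension than $\SGr(k,n)$ whenever $k-1<\binom{k+1}{2}-n+1$, which occurs inside the stated range $2k\leq n\leq\binom{k+1}{2}$ for every $k\geq 5$ (e.g.\ $n=2k$). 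Your stratification therefore cannot be completed in general; at best it works for specific $(k,n)$ after one verifies, stratum by stratum and rank by rank, that the degenerate loci stay below $\dim\SGr(k,n)$ (this does succeed for $(4,9)$, where the largest degenerate stratum has dimension $33<34$). The Pl\"ucker half inherits the same problem, with the additional issue that on $\{q_{[k]}=0\}$ the first $k$ columns of $D_{k,n}|_{[k]}$ vanish identically, so this codimension-one locus lies in $\mV(J_n)$ and already exceeds $\dim\SGr(k,n)$ whenever $n<\binom{k+1}{2}$; the statement only makes sense on the chart $q_{[k]}\neq 0$. For comparison, the paper argues by saturating $I_n$ by the $(n-1)$-minors of the column-deleted matrices $\nu(X)|_S$ and then asserts, without a dimension count, that the discarded locus is smaller than $\SGr(k,n)$ --- that is, it passes over exactly the step at which your more explicit stratification breaks down, so the comparison of the two proofs mainly shows that this dimension estimate is the real content of the proposition and needs to be justified (or the degenerate loci excluded by hypothesis) in both.
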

\begin{proof}
    Consider the variety $\mV(I_{n,n-1})$, where $I_{n,n-1}$ is the ideal of $n$-minors of $\nu(X)$, saturated by the ideals of $(n-1)$-minors of $\nu(X)|_S$, for all $S\in \binom{[n]}{n-1}$. A point $V$ is such that $\nu(V)$ has an element of the right kernel in the torus if and only if $\nu(V)$ is rank deficient and all submatrices obtained by deleting a column have full rank equal to $n-1$. Therefore $\mV(I_{n,n-1}) = \SGr(k,n)$. Since $\SGr(k,n)$ is irreducible by~\Cref{thm : irreducible}, and the vanishing locus of the ideal of $(n-1)$-minors of $(\nu(X)|_S)_S$ has dimension strictly smaller than $\dim(\SGr(k,n))$, the first claim follows.
    
    As above, $\SGr(k,n)$ is equal to the variety of $V\in \Gr(k,n)$ in Pl\"ucker coordinates such that $\nu(D_{k,n}(V))$ is rank deficient but all $(n-1)$ submatrices obtained by deleting one column have full rank equal to $n-1$. If all $(n-1)$-minors of $\nu(D_{k,n})$ are non-vanishing, $q_{[k]}\neq 0$. Then the submatrix of $D_{k,n}$ given by the first $k$ rows, i.e. by the rows labeled by subsets in $\binom{[k]}{k-1}$, is full-rank. Hence, the second statement follows.
\end{proof}

\subsection{The orthogonal Grassmannian}\label{sec : orthogonal}

The self-projecting Grassmannian is closely related to another class of highly interesting subvarieties of the Grassmannian, the orthogonal Grassmannians~\cite{maazouz2024positive}. 
Given a nondegenerate symmetric bilinear form $\omega$, the corresponding orthogonal Grassmannian encodes the spaces which are isotropic with respect to $\omega$. The self-projecting Grassmannian is the Zariski closure of the union over all orthogonal Grassmannians defined by a {\em diagonal} bilinear form. 

\begin{definition}
    Let $\omega$ be a nondegenerate symmetric bilinear form, then the orthogonal Grassmannian $ \OGr^\omega(k,n)$ is the variety of $V\in \Gr(k,n)$ with $\omega(u,v)=0 \text{ for all } u,v\in V$.
\end{definition}

In the following we consider orthogonal Grassmannians with respect to diagonal bilinear forms. Given $\lambda\in (\KK^*)^{n}$, denote by $\eta_\lambda$ the bilinear form 
\[ \eta_\lambda(x,y) = \lambda_1x_1y_1 + \lambda_2x_2y_2+ \cdots + \lambda_{n}x_{n}y_{n}, \]
corresponding to the matrix $\diag(\lambda)$. We write $\OGr^\lambda(k,n)$ for $\OGr^{\eta_{\lambda}}(k,n)$. By definition, we have set-theoretic equality
\begin{equation}
    \SGr^\bullet_\KK(k,n) = \bigcup_{\lambda\in (\KK^*)^{n}} \OGr_\KK^\lambda(k,n). \label{eq: SGr and OGr}
\end{equation} 

For the study of the orthogonal Grassmannian over $\CC$ the choice of the bilinear form does not make a difference. However, over $\RR$ different choices of bilinear forms can behave very differently; see~\Cref{sec : reality and positivity} for more details. In the literature, the bilinear form is usually fixed to be either the diagonal form given by $\alt(1) = ((-1)^{i-1})_{i=1,\ldots,n}\in (\KK^*)^n$ or the form $\omega$ with matrix 
$\begin{pmatrix} 0 & \Id_k\\ \Id_k & 0 \end{pmatrix}$.

We generalize \cite[Proposition 5.1]{Galashin_2020} for the $n=2k$ case for arbitrary nondegenerate diagonal bilinear forms.
\begin{proposition}\label{prop : orthogonal generalized}
    For $V\in \Gr(k,2k)$ the following are equivalent.
    \begin{enumerate}[label=(\roman*)]
        \item $V\in \OGr^\lambda(k,2k)$;
        \item $q_{[2k]\setminus I}(V) = \pm\sqrt{\frac{1}{\lambda_{[2k]}}} \sign(I, [2k]\setminus I)\lambda_{I}q_{I}(V)$, where $\pm$ is the same for all $I \in \binom{[2k]}k$.
    \end{enumerate}
\end{proposition}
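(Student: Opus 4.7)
The plan is to translate condition (i) into an equality of two elements of $\Gr(k,2k)$ and then compare Pl\"ucker coordinates. Writing $V$ as a $k\times 2k$ matrix $M_V$, the equation $M_V\cdot\diag(\lambda)\cdot M_V^t = 0$ says exactly that each row of $M_V\cdot\diag(\lambda)$ is orthogonal (in the standard inner product) to every row of $M_V$, that is, $V\cdot\diag(\lambda)\subseteq V^{\perp}$. Since $\lambda\in(\KK^*)^{2k}$, the matrix $M_V\cdot\diag(\lambda)$ still has rank $k$, so $\dim V\cdot\diag(\lambda) = k = \dim V^{\perp}$ and the inclusion is an equality in $\Gr(k,2k)$.

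Column-scaling by $\lambda$ multiplies each Pl\"ucker coordinate $q_I$ by $\lambda_I=\prod_{i\in I}\lambda_i$, while the standard Pl\"ucker duality on $\Gr(k,2k)$ satisfies $q_I(V^{\perp}) = \sign(I,[2k]\setminus I)\,q_{[2k]\setminus I}(V)$. The equality $V\cdot\diag(\lambda) = V^{\perp}$ in projective space therefore produces a single scalar $\mu\in\KK^*$ such that
\[
    q_{[2k]\setminus I}(V) \;=\; \mu \cdot \sign(I,[2k]\setminus I)\,\lambda_I\, q_I(V) \qquad\text{for every } I\in\tbinom{[2k]}{k}.
\]
To identify $\mu$, I would apply this relation both with $I$ and with $[2k]\setminus I$ and substitute one into the other. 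After the combinatorial factors $\sign(I,[2k]\setminus I)\sign([2k]\setminus I,I)$ cancel (or are absorbed into the convention for Pl\"ucker duality), the resulting identity forces $\mu^2\,\lambda_{[2k]} = 1$, so $\mu = \pm\sqrt{1/\lambda_{[2k]}}$. A single sign is valid across all $I$ because $\mu$ was one global projective scalar. This establishes (i)$\Rightarrow$(ii).

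For the converse, reading the same computation backwards shows that (ii) implies $V^{\perp} = V\cdot\diag(\lambda)$ in $\Gr(k,2k)$, hence $V\cdot\diag(\lambda) \subseteq V^{\perp}$, which is precisely $M_V\cdot\diag(\lambda)\cdot M_V^t = 0$, i.e. (i). I expect the main obstacle to lie in the bookkeeping of signs in Pl\"ucker duality: one needs to verify that the factors $\sign(I,[2k]\setminus I)$ combine so that $\mu^2$ comes out to be exactly $1/\lambda_{[2k]}$, rather than picking up a stray $(-1)^{k^2}$ of the sort that appears when swapping two blocks of size $k$. Once that is pinned down, the argument is a direct generalization of the $\lambda=(1,\ldots,1)$ case treated in \cite[Proposition 5.1]{Galashin_2020}.
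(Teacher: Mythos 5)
Your proposal is correct and follows essentially the same route as the paper: both rewrite (i) as $V^\perp = \lambda\cdot V$, compare Pl\"ucker coordinates of the two sides up to a single global scalar, and apply the resulting identity twice to force $c^2\lambda_{[2k]}=1$. The sign bookkeeping you flag as the remaining obstacle is also left implicit in the paper's proof, so your argument is at the same level of detail.
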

\begin{proof} Let $\lambda\in(\KK^*)^n$.
    We have $V\in \OGr^\lambda(k,2k)$ if and only if $\langle u, \lambda\cdot v\rangle = 0$ for all $u,v\in V$, or equivalently $V^\perp = \lambda\cdot V$. Hence, there exists $c\in \KK^*$ with
    \begin{align*}
        q_I(V) = c \cdot q_{[2k]\setminus I}(\lambda\cdot V)= c \cdot \sign([2k]\setminus I, I)\lambda_{[2k]\setminus I}q_{[2k]\setminus I}(V),
    \end{align*}
    where $\lambda_{I}=\prod_{i\in  I} \lambda_i$. Applying the equality twice, it follows that $c^2\lambda_{[2k]}=1$, hence the statement follows.
\end{proof}

In particular, it follows that $\OGr^\lambda(k,2k)$ has two connected components: $\OGr^\lambda_{+}(k,2k)$ of points $V\in \OGr^\lambda(k,2k)$ such that the equality {\it (ii)} holds with a plus and $\OGr^\lambda_{-}(k,2k)$ where {\it (ii)} holds with a minus.

\begin{remark}\label{rem : components}
   One can ask if it is possible to always find a $\lambda'$ such that $\OGr^\lambda_-(k,2k)$ is equal to $\OGr_+^{\lambda'}(k,2k)$. 
    If $k$ is odd, one can take $\lambda' = - \lambda$. This works for any base field.
    If $k$ is even and the base field is algebraically closed, we can choose a $\lambda' = i \lambda$, where $i$ is a square root of $-1$. 
    Over $\RR$ there is no way to achieve the equality $\OGr^\lambda_-(k,2k) = \OGr_+^{\lambda'}(k,2k)$ if $k$ is even.
\end{remark}

\begin{lemma}\label{lemma : irreducible} If $\KK$ is algebraically closed, then $\SGr_\KK(k,n)$ is the Zariski closure of the image of $\OGr_\KK^{\mathbf{1}}(k,n)$ under the $(\KK^*)^n$ torus action. 

If $\KK = \RR$, then $\SGr_\RR(k,n)$ is the Zariski closure of the image of $\bigcup_{\bu \in \{\pm 1\}^{n}}\OGr_\RR^{\bu}(k,n)$ under the $(\RR_{>0})^n$ action.
\end{lemma}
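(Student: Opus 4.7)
The plan is to build on the set-theoretic identity
\[
\SGr^\bullet_\KK(k,n)=\bigcup_{\lambda\in(\KK^*)^n}\OGr_\KK^\lambda(k,n)
\]
already recorded in~\eqref{eq: SGr and OGr}, and to reparametrize the union on the right-hand side: I want to realize each $\OGr^\lambda$ as an explicit torus-translate of a fixed orthogonal Grassmannian, or, over $\RR$, of one of a finite list of them. Once this rewriting is done, taking Zariski closures of both sides yields the two claims directly from the definition of $\SGr_\KK(k,n)$.

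First I would record how the coordinate torus acts on orthogonal Grassmannians. If $V\in \Gr(k,n)$ is represented by a $k\times n$ matrix $M$ and $t\in(\KK^*)^n$ acts on $V$ by scaling the $j$-th column of $M$ by $t_j$, then
\[
(M\cdot\diag(t))\cdot\diag(\mu)\cdot(M\cdot\diag(t))^t \;=\; M\cdot\diag(t_j^2\mu_j)\cdot M^t.
\]
This immediately gives the key identity $t\cdot \OGr^\mu(k,n) = \OGr^{\mu/t^2}(k,n)$, where $(\mu/t^2)_j=\mu_j/t_j^2$. In other words, the column-scaling torus twists the bilinear form exactly by a coordinate-wise square.

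With this identity in hand, the case split is routine. Over an algebraically closed $\KK$, every $\lambda\in(\KK^*)^n$ admits a coordinate-wise square root $s\in(\KK^*)^n$; writing $\lambda=s^2$ gives $\OGr^\lambda = s^{-1}\cdot \OGr^{\mathbf{1}}$, so the union over $\lambda$ is exactly the orbit $(\KK^*)^n\cdot \OGr_\KK^{\mathbf{1}}(k,n)$. Over $\RR$, each $\lambda\in(\RR^*)^n$ factors uniquely as $\lambda=\bu\cdot s^2$ with $\bu=(\sign\lambda_j)_j\in\{\pm 1\}^n$ and $s\in(\RR_{>0})^n$, so $\OGr^\lambda = s^{-1}\cdot \OGr^{\bu}$, and the union rearranges as the image of $\bigcup_{\bu\in\{\pm 1\}^n}\OGr_\RR^{\bu}(k,n)$ under the $(\RR_{>0})^n$-action. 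Passing to Zariski closure finishes both statements.

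I do not foresee a substantive obstacle. The only genuinely nontrivial check is the twist formula $\mu\mapsto(t_j^2\mu_j)_j$, which is a one-line matrix computation but which governs both the reason why algebraic closure suffices to reduce to a single $\OGr^{\mathbf{1}}$ and, over $\RR$, the reason we are forced to take the finite union over $\{\pm 1\}^n$, since only positive reals admit real square roots.
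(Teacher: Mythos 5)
Your proposal is correct and follows essentially the same route as the paper: both arguments rest on the computation $(M\diag(t))\diag(\mu)(M\diag(t))^t = M\diag(t^2\mu)M^t$, use coordinate-wise square roots (available for all of $(\KK^*)^n$ when $\KK$ is algebraically closed, only for the positive part over $\RR$, whence the finite union over sign vectors $\bu\in\{\pm1\}^n$) to rewrite $\bigcup_\lambda\OGr^\lambda$ as a torus orbit, and then pass to Zariski closures via~\eqref{eq: SGr and OGr}. The only cosmetic difference is that you package the two directions of the paper's argument into a single twist identity $t\cdot\OGr^\mu=\OGr^{\mu/t^2}$.
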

\begin{proof}
    Let $\KK$ be algebraically closed. We show that the image of the torus action on $\OGr^{\mathbf{1}}_\KK(k,n)$ is equal to the set-theoretic union of all orthogonal Grassmannians with respect to some diagonal bilinear form. Let $V\in \OGr^{\mathbf{1}}_\KK(k,n)$, and $\lambda\in (\KK^*)^n$. Choose $\mu\in (\KK^*)^n$ with $\mu_i =\frac{1}{\lambda_i^2}$. Then, the space $\lambda\cdot V$ lies in $\OGr^{\mu}_\KK(k,n)$.
    Conversely, given $\lambda\in (\KK^*)^n$ and $V\in \OGr^{\lambda}(k,n)$, choose $\mu\in (\KK^*)^n$ such that $\mu_i^2 =\lambda_i$. 
    Then, $\mu\cdot V$ lies in $\OGr^{\mathbf{1}}_\KK(k,n)$.
    
    Let $\KK= \RR$. Let $V\in\SGr^\bullet(k,n)$  and $\lambda\in(\RR^*)^n$ such that $V\diag(\lambda)V^t =0$. There exists a unique $\bu \in \{\pm 1\}^{n}$ and $\sqrt{|\lambda|}\in (\RR_{>0})^n$, with  $\lambda_i = \bu_i \cdot (\sqrt{|\lambda|})_i^2$. Then $\frac{1}{\sqrt{|\lambda|}}\cdot V \in \OGr^\bu(k,n)$.
    Conversely, let $V\in \OGr^\bu(k,n)$ for $\bu\in \{\pm 1\}^n$ and let $\lambda\in(\RR_{>0})^n$. Then $\lambda V$ is self-projecting with respect to $\mu_i = u_i \cdot\frac{1}{\lambda_i^2}$. 
\end{proof}

\begin{theorem}\label{thm : irreducible}
    The variety $\SGr(k,n)$ 
    is absolutely irreducible.
\end{theorem}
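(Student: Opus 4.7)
My plan is to deduce absolute irreducibility from the description of $\SGr(k,n)$ in~\Cref{lemma : irreducible} and the classical irreducibility of orthogonal Grassmannians. Since absolute irreducibility is preserved under base change to the algebraic closure, I may assume $\KK$ is algebraically closed. By~\Cref{lemma : irreducible}, $\SGr(k,n)$ is then the Zariski closure of the image of the morphism
\[ \psi\colon (\KK^*)^n \times \OGr^{\mathbf{1}}(k,n) \to \Gr(k,n), \qquad (\mu, V) \mapsto \mu\cdot V, \]
so it suffices to exhibit an irreducible subvariety of the source whose image is Zariski-dense in $\SGr(k,n)$.

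For $2k < n$, the orthogonal Grassmannian $\OGr^{\mathbf{1}}(k,n)$ over an algebraically closed field of characteristic different from $2$ is isomorphic, via a linear change of coordinates, to the standard isotropic Grassmannian of $k$-planes in an $n$-dimensional quadratic space at strictly sub-maximal isotropic dimension, which is classically known to be irreducible. Then $(\KK^*)^n \times \OGr^{\mathbf{1}}(k,n)$ is irreducible, and so is its image under $\psi$ and its Zariski closure $\SGr(k,n)$.

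The main obstacle is the case $n = 2k$, where $\OGr^{\mathbf{1}}(k,2k)$ decomposes into the two irreducible components $\OGr^{\mathbf{1}}_+(k,2k)$ and $\OGr^{\mathbf{1}}_-(k,2k)$ from~\Cref{prop : orthogonal generalized}. The plan is to show that a single torus element identifies these two components set-theoretically, so that the two pieces of the source of $\psi$ have the same image. I would take $\mu = (-1, 1, \ldots, 1) \in \{\pm 1\}^n \subseteq (\KK^*)^n$, which satisfies $\mu_i^2 = 1$ for all $i$ and $\mu_{[2k]} = -1$. The first identity gives $\mu\cdot\OGr^{\mathbf{1}}(k,2k) = \OGr^{\mathbf{1}}(k,2k)$ as sets. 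Using $q_I(\mu V) = \mu_I q_I(V)$, the defining equation of~\Cref{prop : orthogonal generalized}(ii) for $V\in \OGr^{\mathbf{1}}_+(k,2k)$ with $\lambda = \mathbf{1}$ transforms under $V\mapsto \mu V$ by an overall factor $\mu_{[2k]}/\mu_I^2 = -1$, turning the $+$-equation into the $-$-equation. Hence $\mu\cdot \OGr^{\mathbf{1}}_+(k,2k) = \OGr^{\mathbf{1}}_-(k,2k)$, and therefore
\[ (\KK^*)^n \cdot \OGr^{\mathbf{1}}(k,2k) = (\KK^*)^n \cdot \OGr^{\mathbf{1}}_+(k,2k), \]
which is the image under $\psi$ of the irreducible variety $(\KK^*)^n \times \OGr^{\mathbf{1}}_+(k,2k)$. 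Taking Zariski closure exhibits $\SGr(k,2k)$ as the closure of an irreducible set, hence irreducible.
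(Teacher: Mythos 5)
Your proof is correct and follows essentially the same route as the paper: reduce to $\KK$ algebraically closed, invoke \Cref{lemma : irreducible} to write $\SGr(k,n)$ as the closure of the torus sweep of $\OGr^{\mathbf{1}}(k,n)$, use irreducibility of the orthogonal Grassmannian for $n>2k$, and for $n=2k$ observe that the torus action identifies the two components so that $\OGr^{\mathbf{1}}_+(k,2k)$ alone suffices. Your explicit sign vector $\mu=(-1,1,\ldots,1)$ is just a concrete, correctly verified instance of the component swap that the paper delegates to \Cref{rem : components}.
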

\begin{proof}
   Taking the image under the torus action as in \Cref{lemma : irreducible} preserves irreducibility, so the statement follows for $n>2k$ from the irreducibility of the orthogonal Grassmannian. For $n = 2k,$ we consider that in \Cref{lemma : irreducible} we can use $\OGr_+(k,2k)$ instead of $\OGr(k,2k)$, since the torus action maps $\OGr_+(k,2k)$ to $\OGr_-(k,2k)$ as described in \Cref{rem : components}. As $\OGr_+(k,2k)$ is irreducible, the claim follows.
\end{proof}
We can compute the dimension of the self-projecting Grassmannian using the known dimensions of the orthogonal Grassmannian.

\begin{corollary}\label{cor : dim}
    Let~$\KK$~be~algebraically~closed~or~$\RR$. For~$n > \binom{k+1}{2}$,~$\SGr(k,n)=\Gr(k,n)$. For $2k\leq n \leq \binom{k+1}{2}$,
    \[\dim (\SGr_{\KK}(k,n)) =  k(n-k)-\tbinom{k+1}{2}+n-1.\]
\end{corollary}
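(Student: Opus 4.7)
The plan is to parameterize $\SGr(k,n)$ via the incidence variety
\[
    \mZ := \{ (V, \lambda) \in \Gr(k,n) \times (\KK^*)^n : V\cdot\diag(\lambda)\cdot V^t = 0 \}
\]
and compute its dimension by the fiber-dimension theorem applied to the two natural projections. The projection $\pi_2:\mZ \to (\KK^*)^n$ has fiber $\OGr^\lambda(k,n)$ over $\lambda$, and all these fibers are isomorphic to $\OGr^{\mathbf{1}}(k,n)$ via the diagonal torus action. Using the standard fact that $\dim\OGr(k,n) = k(n-k)-\binom{k+1}{2}$ throughout the range $n\ge 2k$ (the symmetric matrix equation $V\diag(\lambda)V^t = 0$ cuts out $\binom{k+1}{2}$ independent conditions on $\Gr(k,n)$ as soon as a single isotropic $k$-plane exists), we obtain $\dim \mZ = n + k(n-k) - \binom{k+1}{2}$.

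The main step is to control the other projection $\pi_1:\mZ \to \Gr(k,n)$. Its image is $\SGr^\bullet(k,n)$, and the fiber over $V$ is the intersection of $\ker\nu(V)$ with the open torus $(\KK^*)^n$. By \Cref{prop : relation with minors}, $\SGr(k,n)$ is the top-dimensional irreducible component of the vanishing locus of the $n$-minors of $\nu(X)$, so on a dense open subset of $\SGr(k,n)$ one has $\rank\nu(V) = n-1$ and the generic fiber of $\pi_1$ is exactly one-dimensional. Since $\SGr(k,n)$ is irreducible by \Cref{thm : irreducible}, the fiber-dimension theorem applied to the dominant morphism $\pi_1$ yields
\[
    \dim\SGr(k,n) \;=\; \dim\mZ - 1 \;=\; k(n-k) - \tbinom{k+1}{2} + n - 1,
\]
as claimed.

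For $n > \binom{k+1}{2}$, the matrix $\nu(X)$ is of size $\binom{k+1}{2}\times n$ with strictly more columns than rows, so $\ker\nu(V)$ is positive-dimensional for every $V$ and meets the open torus for a generic $V$; hence $\SGr(k,n)=\Gr(k,n)$. The real case follows verbatim via the second half of \Cref{lemma : irreducible}, because the finite union $\bigcup_{\bu \in\{\pm 1\}^n}\OGr_\RR^{\bu}(k,n)$ has the same dimension as any single summand. The main obstacle I anticipate is making the rank statement on $\nu(V)$ precise—i.e., ruling out that $\rank\nu(V)\le n-2$ on all of $\SGr(k,n)$. This is settled either by producing a single explicit self-projecting $V$ with $\rank\nu(V)=n-1$, or by running the same dimension count on the torus-action map $(\KK^*)^n\times\OGr^{\mathbf{1}}(k,n) \to \Gr(k,n)$ of \Cref{lemma : irreducible}, whose generic fibers can be seen to be one-dimensional directly from the condition $\alpha^2\in\ker\nu(V)$ on the ambiguity $\alpha$.
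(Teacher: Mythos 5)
Your strategy is the paper's own --- the paper obtains the formula as $\dim\OGr(k,n)+n-1$ via \Cref{lemma : irreducible} together with the known dimension of the orthogonal Grassmannian --- and your incidence variety $\mZ$ is just a repackaging of that parametrization, so the overall plan is sound. The genuine gap is the one you flag yourself, and neither of your two suggested fixes closes it as written. \Cref{prop : relation with minors} only tells you that $\rank \nu(V)\le n-1$ on $\SGr(k,n)$; it does not exclude the possibility that $\rank\nu(V)\le n-2$ at \emph{every} point of $\SGr^\bullet(k,n)$, which is exactly what must be ruled out for the generic fiber of $\pi_1$ to be one-dimensional. Your second proposed remedy is circular: for the map $(\KK^*)^n\times\OGr^{\mathbf 1}(k,n)\to\Gr(k,n)$ the ambiguity $\alpha$ ranges over $\{\alpha:\alpha^2\in\ker\nu(V)\cap(\KK^*)^n\}$, a set of the same dimension as $\ker\nu(V)$, so determining its dimension is the very same corank question. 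What does work is your first remedy: exhibit one $V\in\SGr^\bullet(k,n)$ for which the $n$ rank-one symmetric matrices $v_jv_j^t$ span a space of dimension $n-1$, i.e.\ satisfy only the single relation $\sum_j\lambda_j v_jv_j^t=0$; upper semicontinuity of rank together with irreducibility (\Cref{thm : irreducible}) then makes the corank-one locus dense, and the fiber-dimension theorem applies. Until such a $V$ is produced you have only the upper bound $\dim\SGr(k,n)\le\dim\mZ-1$, not the equality.

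Two secondary points. Your parenthetical justification of $\dim\OGr^\lambda(k,n)=k(n-k)-\tbinom{k+1}{2}$ argues in the wrong direction: imposing $\tbinom{k+1}{2}$ equations bounds the codimension from \emph{above}, so the independence of the conditions is precisely the nontrivial half; the paper outsources this to the cited result on orthogonal Grassmannians, and you should too. Over $\RR$ the argument is not quite ``verbatim'': besides the finite union in \Cref{lemma : irreducible}, you need that at least one nonempty $\OGr^{\bu}_\RR(k,n)$ has real dimension equal to the complex dimension of its complexification (the real points of a variety can a priori form a lower-dimensional set), which is exactly the point the paper's proof makes explicit.
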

\begin{proof}
    The statement follows from \cite[Proposition 2.4]{maazouz2024positive} and~\Cref{lemma : irreducible}. If the base field is $\RR$, these results can be applied, since the quadratic form used in the proof of \cite[Proposition 2.4]{maazouz2024positive} is diagonalizable over $\RR$. In particular, $\OGr^{\alt(1)}(k,n)$ has real dimension equal to its complex dimension.
\end{proof}

\subsection{Point configurations $X(k,n)$}\label{sec : point configuration}

Geometrically, self-projecting vector spaces $V\in \Gr(k,n)^\circ$ are configurations of $n$ points in $\PP^{k-1}$ such that their dual configuration in $\PP^{n-k-1}$ can be projected to the original one. 

\begin{definition}
    The {\em self-projecting configuration space} $X_\KK(k,n)^{\sd}$ is the very affine variety $\SGr_\KK(k,n)^\circ/(\KK^*)^n$ inside $X_\KK(k,n) = \Gr_\KK(k,n)^\circ/(\KK^*)^n$.
\end{definition}

The self-projecting configuration space and the orthogonal Grassmannian are related as follows.

\begin{proposition}\label{thm : XSD and OGr}
    For $\KK$ algebraically closed, there is a dominant finite map between open subsets of the very affine varieties~$\OGr^{\mathbf{1}}(k,n)\,\text{and}\,X_\KK(k,n)^{\sd}$.
\end{proposition}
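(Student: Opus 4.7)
The plan is to take $\varphi$ to be the natural composition
\[ \varphi : \OGr^{\mathbf{1}}(k,n)\cap \Gr(k,n)^\circ \;\hookrightarrow\; \SGr(k,n)^\circ \;\twoheadrightarrow\; X_\KK(k,n)^{\sd}, \]
in which the inclusion is justified by~\eqref{eq: SGr and OGr} and the surjection is the $(\KK^*)^n$-quotient. After restricting both source and target to suitable Zariski opens, I will show that $\varphi$ is dominant and has finite generic fibre; these are precisely the two conditions needed.

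For dominance I invoke~\Cref{lemma : irreducible}: the torus orbit of $\OGr^{\mathbf{1}}(k,n)$ is Zariski-dense in $\SGr(k,n)$, and elements of the same $(\KK^*)^n$-orbit descend to the same point of $X^{\sd}$, so the image of $\varphi$ is dense. For the dimensions,~\Cref{cor : dim} and the formula $\dim\OGr^{\mathbf{1}}(k,n)=k(n-k)-\binom{k+1}{2}$ from \cite[Prop.~2.4]{maazouz2024positive} yield
\[ \dim X_\KK(k,n)^{\sd} \;=\; \dim\SGr(k,n) - (n-1) \;=\; k(n-k) - \tbinom{k+1}{2} \;=\; \dim\OGr^{\mathbf{1}}(k,n). \]
Dominance together with dimension equality already forces $\varphi$ to be generically finite; it remains to confirm this finiteness by a direct fibre computation.

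A point $W\in\varphi^{-1}([V])$ has the form $W = t\cdot V$ with $t\in(\KK^*)^n$, and the orthogonality condition $WW^t=0$ translates into $M_V\diag(t^2)M_V^t=0$, i.e.\ $t^2\in\ker\nu(M_V)$. The technical heart is to show that $\ker\nu(M_V)$ is one-dimensional, spanned by $\mathbf{1}$, for a generic $V\in\SGr(k,n)$. For this I introduce the incidence variety
\[ \mathcal{I} \;=\; \{(V,[\lambda])\in\Gr(k,n)\times\PP^{n-1}\mid V\diag(\lambda)V^t=0\}, \]
which fibres over $\SGr(k,n)$ with generic fibre $\PP(\ker\nu(M_V))$, and over $\PP^{n-1}$ with generic fibre $\OGr^\lambda(k,n)$. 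Equating the two expressions for $\dim\mathcal{I}$ using~\Cref{cor : dim} forces the generic fibre of $\mathcal{I}\to\SGr(k,n)$ to be zero-dimensional, so $t^2\in\KK^*\cdot\mathbf{1}$ and $t=s\cdot\bu$ for some $s\in\KK^*$ and $\bu\in\{\pm1\}^n$. Since the scalar $s$ acts trivially on row spans, the fibre has at most $2^{n-1}$ elements. The main obstacle throughout is exactly this uniqueness-up-to-rescaling of the diagonal bilinear form: it is a priori conceivable that a generic self-projecting space be isotropic for a positive-dimensional family of diagonal forms, and the incidence-variety dimension count is what cleanly rules this out.
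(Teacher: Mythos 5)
Your proposal is correct and takes essentially the same route as the paper: the map is the torus-quotient restricted to $\OGr^{\mathbf{1}}(k,n)$, dominance comes from \Cref{lemma : irreducible}, and the fibre over $[V]$ is identified with the sign vectors $\bu\in\{\pm1\}^n$ modulo global sign, giving $2^{n-1}$ points, exactly as in the paper (which restricts to the open sets where $\ker\nu(V)$ is spanned by a single torus element). The one point where you go beyond the paper is the incidence-variety dimension count showing that $\ker\nu(V)$ is generically one-dimensional; the paper takes this genericity for granted when defining its open subsets $U_O$ and $U_X$, so your argument supplies a justification the paper leaves implicit.
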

\begin{proof}
    Consider the open subset $U_S$ of points $V\in \SGr^\bullet_\KK(k,n)$ such that $\ker \nu(V) = \langle \lambda \rangle$ for some $\lambda\in (\KK^*)^n$. Note that this set is closed under torus action and we can consider the quotient $U_X = U_S/(\KK^*)^n$ as an open subset of $X_\KK(k,n)^{\sd}$. Similarly, consider the open subset $U_O$ of points $V \in \OGr^\mathbf{1}_\KK(k,n)^\circ$ such that $\ker(\nu(V)) = \langle \mathbf{1}\rangle$. Consider the map $\varphi:U_O \to U_X$, sending a space $V\in U_O$ to its equivalence class up to torus action inside $U_X$. That the map is surjective and each fiber contains $2^{n-1}$ points follows similarly to~\Cref{lemma : irreducible}.
\end{proof}

\begin{remark}
    The birational parametrization of $X(k,2k)^{\sd}$ via the special orthogonal group $\SO(k)$ as stated in~\cite[Theorem 2.4]{GHSV24} is not correct. Indeed, $\SO(k)$ is birational to one of the connected components of the orthogonal Grassmannian $\OGr(k,2k)$. Therefore, the map from $\SO(k)$ to $X(k,2k)^{\sd}$ defined by sending an orthogonal matrix $R$ to $(I\mid R)$ is only finite to one, but not injective. 
    However, the main consequence of the Theoreom,~\cite[Corollary 2.5]{GHSV24}, was proven in~\cite[Theorem 4, p.51]{dolgachev1988point} and remains correct.
\end{remark}

\begin{proposition}\label{thm : real XSD and OGr}
    There is a dominant finite map between open subsets of the very affine varieties $\bigcup_{\bu\in\{1\}\times \{\pm 1\}^{n-1}} \OGr^\bu _\RR(k,n)^\circ$ and $X_\RR(k,n)^{\sd}$. 
\end{proposition}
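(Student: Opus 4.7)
The plan is to adapt the proof of \Cref{thm : XSD and OGr} to the real setting, tracking the sign ambiguities that arise because $\RR$ is not algebraically closed. First, I would define the open subset $U_S \subseteq \SGr^\bullet_\RR(k,n)$ consisting of $V$ with $\ker \nu(V) = \langle \lambda \rangle$ for some $\lambda \in (\RR^*)^n$; this set is $(\RR^*)^n$-invariant and descends to an open subset $U_X \subseteq X_\RR(k,n)^{\sd}$. For each $\bu \in \{1\}\times\{\pm 1\}^{n-1}$, let $U_O^\bu \subseteq \OGr^\bu_\RR(k,n)^\circ$ be the open subset where $\ker \nu(V') = \langle \bu \rangle$. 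These subsets are pairwise disjoint, and sending $V'$ to its torus equivalence class $[V']$ defines an algebraic map $\varphi : \bigcup_\bu U_O^\bu \to U_X$.

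For surjectivity, I would apply the construction from \Cref{lemma : irreducible}: given $[V] \in U_X$ with $V \diag(\lambda) V^t = 0$, replace $\lambda$ by $\sign(\lambda_1)\,\lambda$ to normalize $\sign(\lambda_1) = +1$, factor $\lambda = s^2 \bu$ with $s \in (\RR_{>0})^n$ and $\bu \in \{1\}\times\{\pm 1\}^{n-1}$, and verify that $V' := V \diag(s)$ lies in $U_O^\bu$ with $\varphi(V') = [V]$.

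For the fiber count, I would compare two preimages $V' \in U_O^\bu$ and $V'' \in U_O^{\bu'}$ of the same $[V]$. Writing $V'' = G V' \diag(\mu)$ with $G \in \GL(k)$ and $\mu \in (\RR^*)^n$, the identity $\ker \nu(G V' \diag(\mu)) = \diag(\mu^{-2}) \ker \nu(V')$ yields $\bu' = c\,\diag(\mu^{-2})\bu$ for some $c \in \RR^*$. Taking absolute values forces $\mu_i^2 = |c|$ independent of $i$, and comparing signs forces $\bu = \bu'$ thanks to the normalization $\bu_1 = \bu'_1 = +1$. Thus $\mu = \sqrt{|c|}\,\epsilon$ for some $\epsilon \in \{\pm 1\}^n$; the positive scalar $\sqrt{|c|}$ is absorbed by the $\GL(k)$-equivalence defining $\Gr(k,n)$, and $\epsilon$ and $-\epsilon$ produce the same row span, so each fiber consists of exactly $2^{n-1}$ points.

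The main obstacle is the sign-bookkeeping unique to $\RR$: since we cannot freely take square roots, the complex orthogonal Grassmannian $\OGr^{\mathbf{1}}$ splits into $2^{n-1}$ real components indexed by $\bu \in \{1\}\times\{\pm 1\}^{n-1}$; one must verify that the canonical normalization $\bu_1 = +1$ gives a well-defined sign-pattern invariant on $U_X$; and the fiber count must distinguish $\GL(k)$-equivalence from torus-equivalence. Dominance is then immediate from the surjectivity onto the dense open $U_X \subseteq X_\RR(k,n)^{\sd}$.
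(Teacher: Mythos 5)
Your proposal is correct and follows essentially the same route as the paper: the paper defines the same open sets $U_S$, $U_X$, and $U_O=\bigcup_\bu U_O^\bu$, uses the same quotient map $\varphi$, and cites the construction of \Cref{lemma : irreducible} for surjectivity and the $2^{n-1}$ fiber count. Your write-up simply makes explicit the sign-bookkeeping (the factorization $\lambda=s^2\bu$ with $\bu_1=+1$ and the analysis $\mu=\sqrt{|c|}\,\epsilon$ with $\epsilon\sim-\epsilon$) that the paper leaves to the reader.
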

\begin{proof}
    Consider the same open subsets $U_S,U_X$ as in the proof of~\Cref{thm : XSD and OGr}. Let $U_O$ be the union of the open subsets of points $V \in \OGr^\bu_\RR(k,n)^\circ$ such that $\ker(\nu(V)) = \langle \bu \rangle$, for all $\bu\in \{1\}\times\{\pm 1\}^{n-1}$. As before, we can define a map $\varphi: U_O \to U_X$ by mapping a point $V\in U_O$ to its equivalence class up to the action of $(\RR_{>0})^n$. That $\varphi$ is surjective and each fiber contains $2^{n-1}$ points follows similarly to~\Cref{lemma : irreducible}.
\end{proof}

\subsection{Associated point configurations}

The study of (self-)dual point configurations has a rich history. Introduced in 1922 by Coble \cite{coble}, they were studied among others by Dolgachev--Ortland, Petrakiev, Kapranov~\cite{dolgachev1988point,petrakiev,kapranov93chow}.

Recall that an \emph{associated} point configuration is a pair $(x,y)\in(\PP^{k-1})^n\times (\PP^{n-k-1})^n$, such that $(\phi(x_i,y_i))_{i \in[n]}\in(\PP^{k(n-k)-1})^n$ is a minimal projectively dependent set, where $\phi:\PP^{k-1}\times \PP^{n-k-1}\to \PP^{k(n-k)-1}$ is the Segre embedding.
 
\begin{proposition}
    A self-projecting point configuration $X\in X(k,n)^{\sd}$ defines associated points $(x,y)\in (\PP^{k-1})^n\times (\PP^{n-k-1})^n$, with $\pi(y_i) = x_i$, for all $i \in [n]$, such that the equivalence class of $x$ in $X(k,n)^{\sd}$ is equal to $X$.
    Here $\pi: \PP^{n-k-1}\to \PP^{k-1}$ is the projection to the first $k$ coordinates.
\end{proposition}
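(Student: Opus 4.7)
The plan is to construct the associated configuration explicitly from the self-projecting data. Let $X \in X(k,n)^{\sd}$ be represented by a full-rank $k \times n$ matrix $V$ whose columns $x_1, \ldots, x_n$ are the points of the configuration in $\PP^{k-1}$. By the self-projecting condition (\Cref{def : self-proj}), there exists $\lambda \in (\KK^*)^n$ such that $V \cdot \diag(\lambda) \cdot V^t = 0$. This says exactly that the rows of the matrix $\diag(\lambda) \cdot V^t$ (equivalently, the columns of $\diag(\lambda) \cdot V^t$ transposed) lie in $V^\perp$.

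Next, I observe that the $k$ rows of $\diag(\lambda) \cdot V$ are linearly independent, since $\diag(\lambda)$ is invertible and $V$ has rank $k$, and they all lie in the $(n-k)$-dimensional subspace $V^\perp$. Because $n \geq 2k$, I can extend them to a full basis of $V^\perp$, producing an $(n-k) \times n$ matrix
\[
W \;=\; \begin{pmatrix} \diag(\lambda)\cdot V \\ W' \end{pmatrix}
\]
with $W'$ of size $(n - 2k) \times n$, chosen so that $\rowspan(W) = V^\perp$, i.e.\ $V \cdot W^t = 0$. Let $y_i \in \KK^{n-k}$ be the $i$-th column of $W$. Then by construction the first $k$ entries of $y_i$ are $\lambda_i x_i$, so after projectivizing we have $\pi(y_i) = [\lambda_i x_i] = [x_i] = x_i$ in $\PP^{k-1}$, as required. (On the open stratum $\SGr^\bullet(k,n)^\circ$ none of the $x_i$ vanishes, so each $\pi(y_i)$ is a well-defined point.)

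It remains to verify that $(x, y)$ is an associated configuration in the sense recalled above. By the standard Gale-duality description, $(x,y)$ is associated precisely when the columns $y_i$ can be rescaled so that the Segre vectors $\phi(x_i, y_i) = x_i y_i^t$ satisfy $\sum_i x_i y_i^t = V \cdot W^t = 0$; this is exactly the condition $\rowspan(W) \subseteq V^\perp$, which holds by our choice of $W$. Finally, the equivalence class of $x = (x_1, \ldots, x_n)$ in $X(k,n)^{\sd}$ is the class of $V$, which is $X$ by construction.

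The only genuine content is the observation that the self-projecting identity $V \diag(\lambda) V^t = 0$ lets one insert $\diag(\lambda) \cdot V$ as a block of rows into a basis matrix for $V^\perp$; the rest is linear algebra and unwinding definitions. The main point to be careful about is the genericity hypothesis so that the projection $\pi$ is defined on all $y_i$; this is guaranteed by working on the torus-open locus $\SGr^\bullet(k,n)^\circ$ where no $x_i$ is zero.
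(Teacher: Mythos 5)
Your proof is correct and follows essentially the same route as the paper: both arguments place a $\lambda$-rescaled copy of $V$ as the first $k$ rows of a basis matrix for $V^\perp$, read off the columns as the $y_i$, and invoke the Gale-duality characterization of associated points via $X\Lambda Y^t=0$. The only blemish is a transposition slip: the block you insert should be $V\cdot\diag(\lambda)$ (a $k\times n$ matrix whose rows lie in $V^\perp$ because $V\diag(\lambda)V^t=0$), not $\diag(\lambda)\cdot V$, which is not dimensionally consistent; with that fixed, the identification of the first $k$ coordinates of $y_i$ with $\lambda_i x_i$ goes through exactly as you state.
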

\begin{proof}
    The claim follows from the following observation. Let $X,Y$ be matrices with representatives of the projective points $x_i, y_i$ in the columns, respectively. Then $x,y$ are associated if and only if there exists a diagonal matrix $\Lambda$ with non-zero entries such that $X\Lambda Y^t =0$. Equivalently, $Y= \Lambda\cdot \ker(X)$.
    For $X\in \SGr^\bullet(k,n)$ with $X\Lambda X^t =0$, we have $X \subseteq \Lambda\ker(X)$. Take $Y=\ker(X)$ represented by a matrix containing in the first $k$ rows a basis of $X$.
\end{proof}

The equivalent definition of associated point configurations via the existence of a full-rank diagonal matrix $\Lambda$ such that $\Lambda Y = \ker(X)$, motivated our definition of self-projectivity. 
In particular, $X$ is self-associated if and only if $X$ is self-dual. The main difference between self-dual and self-associated points is the ambient space where they are considered: the first is considered in $X(k,2k)$, while the second is considered in $(\PP^{k-1})^{2k}$. We can move between these via the {\em Gelfand-MacPherson correspondence}~\cite{GM, kapranov93chow}
\begin{align*}
    \varphi: (\PP^{k-1})^n \dashrightarrow \Gr(k,n)/(\KK^*)^{n-1}, \quad (p_1,\ldots,p_n)\mapsto [\rowspan(p_1 \, \ldots \, p_n)]
\end{align*}
where $(p_1 \, \ldots \, p_n)$ is the $k\times n$ matrix with columns $p_i$. The map is defined whenever the $n$ points do not lie on a vector space of dimension smaller than~$k$. The map is surjective, but it is injective only up to the diagonal action of $\PGL(k)$ on $(\PP^{k-1})^n$. That is, there is a birational map
\[\overline{\varphi}:\; \raisebox{-0.2cm}{$\PGL(k)$}\backslash(\PP^{k-1})^n \dashrightarrow \Gr(k,n)/\raisebox{-0.2cm}{$(\KK^*)^{n-1}$}. \]
By Gelfand and MacPherson, there is a natural diffeomorphism \cite[Proposition 2.2.2]{GM}, between the generic configurations in $(\PP^{k-1})^n$ and $X(k,n)$. 
This descends to a diffeomorphism between the self-projecting configuration space $X(k,n)^{\sd}$ and the generic configurations of points in the parameter space $X_{k-1,2^m,n}$, where $m=\binom{k+1}{2}-n+1$, as defined in~\Cref{sec : point conf hypersurfaces}.

A priori, the quotient $\Gr(k,n)/(\KK^*)^{n-1}$ is well-defined only in the open part $\Gr(k,n)^\circ$.
There are many possible choices to define the quotient on the non-closed orbits.
The solution we use is Kapranov's Chow quotient~\cite{kapranov93chow}, where $\Gr(k,n)\sslash(\KK^*)^{n-1}$ can be thought of as the space of limits of closures of generic orbits. For a detailed definition, see~\cite{kapranov93chow}.
We can restrict the Chow quotient $\Gr(k,n)\sslash(\KK^*)^{n-1}$ to the self-projecting Grassmannian.
This defines a compactification of the self-projecting configuration space $X(k,n)^{\sd}$. Keel and Tevelev~\cite{KT} showed that for the self-dual case this is equal to the tropical compactification, as in \cite[Section 6.4]{MS}.
By \cite[Theorem 2.2.4]{kapranov93chow} the Gelfand--MacPherson correspondence extends to an isomorphism of the Chow quotients 
\[
    \overline{\varphi}:\;\raisebox{-0.2cm}{$\PGL(k)$}\lsslash(\PP^{k-1})^n \rightarrow \Gr(k,n)\sslash\raisebox{-0.2cm}{$(\KK^*)^{n-1}$}. 
\]

\subsection{Point configurations on hypersurfaces $X_{r,d^m,n}$ and beyond}\label{sec : point conf hypersurfaces}

Point configurations contained in certain varieties, like hypersurfaces, also appear often in the literature \cite{ARS25,caminata2023determinantal,chan2021moduli,traves2024ten}. The self-dual configuration space is strictly related to the moduli space $X_{r,d^m,n}\subset (\PP^r)^n$ describing configurations of $n$ points in $\PP^r$ lying on $m$ hypersurfaces of degree $d$. This space was introduced by Caminata, Moon and Schaffler in~\cite{caminata2023determinantal} motivated by the study of the geometry of data sets.

Setting $r = k-1$, $d = 2$, and $m = \binom{k+1}{2}-n +1$, by~\cite[Prop 2.4]{caminata2023determinantal}, $X_{k-1,2^m,n}$ is cut out by the $n$-minors of $\nu(X)$. Thus, it consists of projective points $V$ such that $\nu(V)$ has a non-trivial right kernel. Recall by~\Cref{prop : relation with minors} that $\SGr(k,n)$ is the generic irreducible component of the vanishing locus of the $n$-minors of $\nu(V)$. After quotienting by the torus action, we obtain the following.
\begin{proposition}
    Under the Gelfand-MacPherson correspondence, the self-projecting Grassmannian $\SGr(k,n)$ corresponds to the generic irreducible component of $X_{k-1,2^m,n}$, with $m = \binom{k+1}{2}-n+1$, where generic means that the hypersurfaces are not degenerate, irreducible and intersect in the expected way.
    
    In particular, we obtain an isomorphism between the quotients 
    \[
        \SGr\left(k,\tbinom{k+1}{2}\right)\sslash\raisebox{-0.2cm}{$(\KK^*)^{\tbinom{k+1}{2}-1}$} \to \raisebox{-0.2cm}{$\PGL(k)$}\lsslash X_{k-1,2^1,\tbinom{k+1}{2}}.
    \]
\end{proposition}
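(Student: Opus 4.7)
The plan is to match defining equations on both sides and then invoke Kapranov's Chow-quotient extension of the Gelfand--MacPherson correspondence. By \cite[Proposition 2.4]{caminata2023determinantal}, the space $X_{k-1,2^m,n}\subset (\PP^{k-1})^n$ with $m = \binom{k+1}{2}-n+1$ is exactly the locus cut out by the $n$-minors of the multi-Veronese matrix $\nu(X)$: it consists of configurations $V$ for which $\dim \ker \nu(V) \geq m$, i.e.\ on which at least $m$ linearly independent quadrics vanish. Meanwhile, by \Cref{prop : relation with minors}, $\SGr(k,n)$ is precisely the top-dimensional irreducible component of the vanishing locus of those same $n$-minors in dual Stiefel coordinates.

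I would next identify the "generic" irreducible component of $X_{k-1,2^m,n}$ with the locus corresponding to $\SGr(k,n)$. The top-dimensional component of the $n$-minor locus is characterized by the existence of some $\lambda\in(\KK^*)^n$ in $\ker \nu(V)$, that is, a kernel vector with no forced zero coordinates. Interpreting $\lambda$ as the coefficient vector of a quadric in the monomial basis supplied by the Veronese, this says the configuration lies on a non-degenerate irreducible quadric meeting the coordinate strata in the expected way, as opposed to lying on a union of coordinate hyperplanes. The other components of the $n$-minor locus correspond to $\ker \nu(V)$ being forced into a proper coordinate subspace, yielding degenerate quadrics or configurations trapped in a strictly smaller intersection. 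The Gelfand--MacPherson map $\overline{\varphi}$ preserves this data: a configuration $(p_1,\ldots,p_n)$ vanishes on the quadric with coefficient vector $\lambda$ if and only if its matrix $V$ of column representatives satisfies $\nu(V)\lambda=0$. Restricting $\overline{\varphi}$ to the two generic loci yields the first claim.

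For the second statement, specialize to $n=\binom{k+1}{2}$, so that $m=1$ and only a single quadric is involved. By \cite[Theorem 2.2.4]{kapranov93chow}, $\overline{\varphi}$ upgrades to an isomorphism of Chow quotients
\[
    \raisebox{-0.2cm}{$\PGL(k)$}\lsslash(\PP^{k-1})^n \;\xrightarrow{\;\sim\;}\; \Gr(k,n)\sslash\raisebox{-0.2cm}{$(\KK^*)^{n-1}$}.
\]
This isomorphism restricts to the claimed one on the subvarieties identified in the first part. The main obstacle will be verifying that restriction and Chow quotient commute on the relevant loci: I would handle this by describing $\SGr(k,n)$ as the closure of an open $(\KK^*)^n$-invariant subvariety of $\Gr(k,n)$, so that its Chow quotient is the closure of the image of that subvariety inside the Chow quotient of the ambient Grassmannian, and similarly on the $\PGL(k)$-side for $X_{k-1,2^1,n}$; a dimension count via \Cref{cor : dim} confirms that the generic orbit closures on the two sides match.
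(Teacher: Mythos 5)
Your overall strategy---matching the $n$-minor equations of $\nu(X)$ on the two sides, using the irreducibility of $\SGr(k,n)$ together with the Gelfand--MacPherson isomorphism, and passing to Chow quotients by taking closures---is the same as the paper's. However, there is a concrete error in the step where you identify the ``generic'' component. The matrix $\nu(V)$ is $\binom{k+1}{2}\times n$: the quadrics vanishing on the configuration correspond to its \emph{left} kernel (vectors in $\KK^{\binom{k+1}{2}}$ indexed by monomials), whereas the vector $\lambda\in(\KK^*)^n$ witnessing self-projectivity lies in the \emph{right} kernel and records a linear dependence among the $n$ Veronese points. So ``interpreting $\lambda$ as the coefficient vector of a quadric'' is not meaningful, and your explanation of why the relevant component is the one with non-degenerate, irreducible quadrics intersecting in the expected way does not go through as written. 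The cleaner route (and the paper's) is to observe that the image of the irreducible $\SGr(k,n)^\circ$ under the Gelfand--MacPherson isomorphism is an irreducible, top-dimensional subset of $\PGL(k)\backslash X_{k-1,2^m,n}$, hence an irreducible component, and then identify it as the generic one.

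For the ``in particular'' statement with $n=\binom{k+1}{2}$, your dimension count is not sufficient: the locus $X_{k-1,2^1,n}$ is a determinantal hypersurface, and if it had several top-dimensional components, the restriction of $\overline{\varphi}$ would land in only one of them, so you would get an isomorphism onto a component rather than onto $\PGL(k)\lsslash X_{k-1,2^1,n}$ itself. The missing ingredient is the irreducibility of $X_{k-1,2^1,n}$, which is exactly Theorem A of \cite{caminata2023determinantal} and is the fact the paper invokes at this point.
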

\begin{proof}  
    For a point $X\in\SGr(k,n)$, the $n$-minors of second multi-Veronese matrix $\nu(X)$ all vanish.
    Therefore, when restricting the Gelfand-MacPherson correspondence to the open self-projecting Grassmannian $\SGr(k,n)^\circ$, the image is contained in $\raisebox{-0.1cm}{$\PGL(k)$}\backslash X_{k-1,2^m,n}.$  As the Gelfand-MacPherson correspondence is an isomorphism and $\SGr(k,n)$ is irreducible, it follows that the image is an irreducible component of $\raisebox{-0.1cm}{$\PGL(k)$}\backslash X_{k-1,2^m,n},$  where the quadratic hypersurfaces are non-degenerate, irreducible, and intersect in the expected way.  This extends to the level of the Chow quotients, when we take the closure. 
    If $n = \binom{k+1}{2}$, we have $m = 1$ and $X_{k-1,2^m,n}$ is irreducible by \cite[Theorem A]{caminata2023determinantal}.
\end{proof}

From \cite[Theorem A]{caminata2023determinantal}, we know that $X_{r,d^1,n}$ is a geometrically irreducible Cohen–Macaulay normal variety in $(\PP^r)^n$. When this is isomorphic to the self-projecting configuration space, we obtain the same properties for $\SGr(k,n)$.

\begin{corollary}
    For $n = \binom{k+1}{2}$, we have that $\SGr(k,n)$ is reduced, irreducible, and Cohen-Macaulay in $\Gr(k,n)$.
\end{corollary}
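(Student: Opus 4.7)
The plan is to transfer the desired properties from $X_{k-1,2^1,n}$ to $\SGr(k,n)$ via the isomorphism of Chow quotients established in the preceding proposition. When $n=\binom{k+1}{2}$, the exponent $m=\binom{k+1}{2}-n+1$ equals $1$, so Theorem~A of Caminata--Moon--Schaffler applies to assert that $X_{k-1,2^1,n}$ is geometrically irreducible, reduced, and Cohen--Macaulay (in fact normal). The preceding proposition furnishes an isomorphism $\SGr(k,n)\sslash(\KK^*)^{n-1}\cong\PGL(k)\lsslash X_{k-1,2^1,n}$, so these three properties descend to the common quotient.

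To lift them back up to $\SGr(k,n)$ itself, we exploit the hypersurface structure that becomes available precisely at $n=\binom{k+1}{2}$. At this value of $n$ the multi-Veronese matrix $\nu(X)$ is square of size $n\times n$, and the ideal $I_n$ appearing in~\Cref{prop : relation with minors} collapses to the principal ideal $\langle\det\nu(X)\rangle$; likewise, in Pl\"ucker coordinates, the $n$-minors of $\nu(D_{k,n}|_{[k]})$ reduce to a single determinant. Thus $\SGr(k,n)$ is the top-dimensional component of the hypersurface $\mV(\det\nu(X))$ inside the smooth Grassmannian. By~\Cref{thm : irreducible} this top component is irreducible, and the reducedness of $X_{k-1,2^1,n}\cong\mV(\det\nu(X))$ supplied by Caminata--Moon--Schaffler forces $\det\nu(X)$ to be squarefree. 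Hence $\SGr(k,n)$ is an irreducible reduced hypersurface in a smooth ambient variety, and the Cohen--Macaulay property then follows from the classical fact that any hypersurface in a smooth scheme, being locally cut out by a single non-zero-divisor, is Cohen--Macaulay.

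The main obstacle will be making the ``principal ideal $=$ single determinant'' identification precise on the Grassmannian side, where $\SGr(k,n)$ lives. Concretely, one must verify that on the affine chart of $\Gr(k,n)$ where $q_{[k]}\neq 0$, the defining equations of $\SGr(k,n)$ reduce to a single polynomial whose irreducibility and squarefreeness are controlled by the corresponding statements for $\det\nu(X)$ in dual Stiefel coordinates. This rests on the $\GL(k)$-equivariance of $\nu$ via the symmetric square $G^{[2]}$, already recorded before~\Cref{prop : ideal}. Once this identification is in place, all three desired conclusions follow formally.
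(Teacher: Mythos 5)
Your proof is correct and, at bottom, takes the same route as the paper: the paper identifies $\SGr\bigl(k,\binom{k+1}{2}\bigr)$ with the codimension-one rank-deficiency locus $Z_2(k,n)$ of the multi-Veronese matrix and cites \cite[Theorem 3.5]{ARS25}, whose content is essentially the determinantal-hypersurface argument you reconstruct from \cite[Theorem A]{caminata2023determinantal}. Two remarks on the write-up. First, your opening paragraph on descending the properties through the Chow and $\PGL(k)$ quotients is not used in the rest of the argument and is best deleted: transferring reducedness and Cohen--Macaulayness across such quotients would itself need justification. Second, to know that $\SGr(k,n)$ is the \emph{entire} scheme $\mV(\det\nu(X))$ (so that the determinantal scheme structure is the one whose reducedness and Cohen--Macaulayness you are establishing), squarefreeness of $\det\nu(X)$ alone is not enough --- a squarefree determinant with several irreducible factors would make $\SGr(k,n)$ only one component of the hypersurface. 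You need the determinant to be irreducible, which follows from combining the geometric irreducibility \emph{and} reducedness of $X_{k-1,2^1,n}$ in Theorem~A; since you quote both, this is a gap only in the phrasing, not in the mathematics. The chart-passage issue you flag is handled exactly as you suggest, via $\nu(GM)=G^{[2]}\nu(M)$ and the fact that $\det(G^{[2]})$ is a unit on the locus $q_{[k]}\neq 0$.
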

\begin{proof} 
    This statement follows directly from \cite{caminata2023determinantal} and can be found in detail in \cite[Theorem 3.5]{ARS25}.
    In \cite{ARS25}, the variety $Z_2(k,n)$ is introduced, as the variety of points $\rowspan(M)\in \Gr(k,n)$ where the second multi-Veronese  matrix of $M$ is not of full rank. This coincides with $\SGr(k,n)$ whenever $\SGr(k,n)$ is of codimension 1, i.e. whenever $n = \binom{k+1}{2}$.  
    From~\cite[Theorem 3.5]{ARS25}, we know that $Z_2(k,n)$ is reduced, irreducible and Cohen-Macaulay.
\end{proof}

For higher codimension of $\SGr(k,n)$ in $\Gr(k,n)$ the situation is very intricate. As described in \cite[Section 2]{caminata2023determinantal}, the structure of $X_{r,d^m,n}$ for $m>1$ is very complicated. For example, it can have  multiple top-dimensional irreducible components.
Therefore, it is unclear whether properties like normal or Cohen-Macaulay still hold and, via Gelfand-MacPherson, could be applied to  $\SGr(k,n)$. As the techniques used in \cite{caminata2023determinantal,ARS25} cannot be generalized for $m>2$, one would need much more heavy machinery to investigate this question. 

\begin{remark}
    Special instances of the self-projecting Grassmannian appear hidden in the literature. One example is the ABCT variety $V(3,n)$ for $n=6$, which coincides with $\SGr(3,6)$.
    The ABCT variety $V(3,n)$ was first introduced by Arkani-Hamed, Bourjaily, Cachazo and Trnka \cite{arkani2011unification} in the context of the spinor-helicity formalism in physics. In
    \cite[Section 4]{ARS25} the authors study the homology class and degree of $V(3,6) = \SGr(3,6)$, while \cite[Section 5]{ARS25} discusses the difficulties in defining a positive geometry structure. 
\end{remark}

\section{Geometric interpretations}\label{sec : geom int}

In this section, we study in detail some self-projecting Grassmannians and give a geometric interpretation for them. 
We set $\KK$ to be algebraically closed and omit it from the notation.

\subsection{9 self-projecting points in $\PP^3$} \label{sec : X49}

The first non-trivial example of the self-projecting Grassmannian, which differs from the self-dual case, is $\SGr(4,9)$. An element of $X(4,9)$ is a generic configuration of $9$ points in $\PP^3$ up to projective equivalence.
Given $9$ points in general position in $\PP^3$ there is a unique quadric surface passing through the points. If the point configuration is self-projecting, there exists a $1$-dimensional family of quadric surfaces through the points. 
The intersection of two linearly independent quadric surfaces in $\PP^3$ defines a quartic curve. For general quadric surfaces, this curve is irreducible and of genus 1, i.e., it is an elliptic curve. Hence, a generic $V\in X(4,9)^{\sd}$ defines an elliptic curve with $9$ marked points. That is, we obtain a map
\[ 
    \varphi: X(4,9)^{\sd} \to \mM_{1,9}. 
\]
However, the moduli space $\mM_{1,9}$ of elliptic curves with $9$ marked points is a $9$-dimensional variety, while $X(4,9)^{\sd}$ is $10$-dimensional. Informally, the extra dimension of the self-projective configuration space corresponds to the choice of an embedding for the elliptic curve. Formally, we have the following.

\begin{theorem}\label{thm : X49}
    The self-projecting configuration space $X(4,9)^{\sd}$ and the moduli space $\mM_{1,10}$ are birationally equivalent.
\end{theorem}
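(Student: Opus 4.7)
Both varieties are irreducible of dimension $10$: for $\mM_{1,10}$ by the formula $\dim\mM_{1,n}=n$, and for $X(4,9)^{\sd}$ by \Cref{cor : dim} (giving $\dim\SGr(4,9)=18$) minus the dimension $8$ of the effective torus $(\KK^*)^9/\KK^*$. The plan is therefore to construct mutually inverse rational maps
\[
    \varphi\colon X(4,9)^{\sd}\dashrightarrow\mM_{1,10},\qquad
    \psi\colon \mM_{1,10}\dashrightarrow X(4,9)^{\sd},
\]
and to verify they are mutually inverse on dense open subsets.

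For the forward map $\varphi$, I would start with a generic $V\in X(4,9)^{\sd}$ represented by nine ordered points $x_1,\ldots,x_9\in\PP^3$. The self-projecting hypothesis forces the $x_i$ to lie on a pencil of quadrics, whereas nine general points in $\PP^3$ lie on exactly one. For $V$ generic the base locus of this pencil is a smooth irreducible elliptic quartic $E\subset\PP^3$ through the $x_i$, so I set $p_i:=x_i\in E$ and $L:=\mathcal{O}_{\PP^3}(1)|_E$, a very ample degree-$4$ line bundle on $E$. Since $L\otimes\mathcal{O}_E(-3p_1)$ has degree $1$, it equals $\mathcal{O}_E(p_{10})$ for a unique $p_{10}\in E$, and I define $\varphi(V):=(E,p_1,\ldots,p_{10})$. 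This construction uses only the intrinsic data of the pair $(E,L)$ together with the marked points, so it is $\PGL(4)$-invariant and descends to $X(4,9)^{\sd}$.

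For the inverse map $\psi$, given a generic $(E,p_1,\ldots,p_{10})\in\mM_{1,10}$, I set $L:=\mathcal{O}_E(3p_1+p_{10})$. Since $\deg L=4\geq 2g+1$ for $g=1$, $L$ is very ample with $h^0(L)=4$, and $|L|$ embeds $E$ as a smooth elliptic quartic in $\PP^3$, well-defined up to $\PGL(4)$. The images $x_i\in\PP^3$ of $p_i$ for $i=1,\ldots,9$ lie on the pencil of quadrics cutting out $E$, hence form a self-projecting configuration $\psi(E,p_1,\ldots,p_{10}):=[x_1,\ldots,x_9]\in X(4,9)^{\sd}$. The composition $\varphi\circ\psi$ is the identity on a dense open, since $\psi$ realizes $\mathcal{O}_{\PP^3}(1)|_E=L=\mathcal{O}_E(3p_1+p_{10})$, from which $\varphi$ recovers exactly $(E,p_1,\ldots,p_{10})$ by the same defining formula. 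Conversely $\psi\circ\varphi$ re-embeds $E$ via $|L|$, which agrees with the original embedding up to $\PGL(4)$, returning the original $V$; compatibility with isomorphisms of marked curves follows because a morphism $\phi\colon E_1\to E_2$ with $\phi^*L_2\cong L_1$ and $\phi(p_i)=p_i'$ automatically satisfies $\phi(p_{10})=p_{10}'$ by uniqueness.

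The hard part will be the genericity verification on the $X(4,9)^{\sd}$ side: one must show that for $V$ in a dense open the base locus of the pencil of quadrics is in fact a smooth irreducible elliptic quartic, ruling out degenerate possibilities such as a union of two plane conics, a twisted cubic with a secant line, or a quartic acquiring a singular point. I would handle this by a dimension count on the Hilbert scheme of curves in $\PP^3$, showing that each degenerate stratum corresponds to a proper closed subset of $\SGr(4,9)$ of dimension strictly less than $18$. The parallel check on the $\mM_{1,10}$ side, namely that for generic moduli the images of $p_1,\ldots,p_9$ lie in linearly general position in $\PP^3$, follows by a similar dimension count on the universal elliptic curve.
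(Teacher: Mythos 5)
Your proposal is correct and follows essentially the same route as the paper: both encode the degree-$4$ embedding line bundle of the elliptic quartic through the nine points by a tenth marked point (you use $3p_1+p_{10}\sim H$ where the paper uses $p_1+p_2+p_3+p_{10}\sim H$), and both ultimately rest on irreducibility and equality of dimensions. The only differences are cosmetic: you construct an explicit inverse where the paper instead proves generic injectivity of the forward map and concludes by the dimension count, and you explicitly flag the genericity of the smooth irreducible base locus, which the paper handles by simply restricting to the open set where $C_V$ is an irreducible genus-$1$ curve.
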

\begin{proof}
    Consider the open subset $U$ of $X(4,9)^{\sd}$ of points $V\in X(4,9)^{\sd}$ on an irreducible curve $C_V$ of genus 1. 
    Different representations of $V\in U$ define isomorphic elliptic curves in $\PP^3$. Consider the hyperplane $H$ through $p_1,p_2,p_3$ in $\PP^3$ and denote by $p_{10}$ the fourth intersection point of $H$ with $C_V$. Since the $9$ points are in general position, no four of them lie on a hyperplane. Thus, $p_{10}\neq p_i$ for all $i\in [9]$. Therefore, we can define a map $\psi: U \to \mM_{1,10}$.

    The map $\psi$ is injective on $U$. For any two configurations $V,W\in X(4,9)^{\sd}$, where
    $V = (p_1\ldots p_9)$ and $W= (q_1\ldots q_9)$,  and $\psi(V)=\psi(W)$, then the curves $C_V$ and $C_W$ and the linear systems $|p_1+p_2+p_3+p_{10}|$ and $|q_1+q_2+q_3+q_{10}|$ are isomorphic. Hence, there exists a map $A\in \PGL(4)$ realizing this isomorphism. Since the curves are isomorphic as marked curves in $\mM_{1,10}$, this implies that $A$ maps each $p_i$ to $q_i$. It follows that $V$ and $W$ are equivalent up to $\PGL(4)$-action. In particular, they describe the same point in $X(4,9)^{\sd}$.

    Since $\mM_{1,10}$ is irreducible and has the same dimension as $X(4,9)^{\sd}$, it follows that $\psi$ is a birational map between the two varieties.
\end{proof}

\begin{corollary}
    There is a birational map between $X(3,9)$ and $X(4,9)^{\sd}$ obtained by lifting the elliptic curve from $\PP^2$ to $\PP^3$.
\end{corollary}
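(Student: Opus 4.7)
The plan is to factor the desired map through the moduli space $\mM_{1,10}$, using Theorem~\ref{thm : X49} for one of the two legs. I would first construct a birational map $\psi': X(3,9) \dashrightarrow \mM_{1,10}$ by mimicking the construction used there. Given a generic configuration $(p_1,\ldots,p_9) \in X(3,9)$, the nine points impose independent conditions on plane cubics and hence lie on a unique cubic $C \subset \PP^2$, which for generic input is smooth of genus $1$. Let $p_{10}$ be the third intersection point of the line $\overline{p_1 p_2}$ with $C$. Since no four of the nine generic points are collinear, $p_{10}$ is distinct from $p_3,\ldots,p_9$, and we obtain a marked elliptic curve $(C,p_1,\ldots,p_{10}) \in \mM_{1,10}$.

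To verify that $\psi'$ is birational, I would run the argument in the proof of Theorem~\ref{thm : X49} essentially verbatim. The key intrinsic datum is that the embedding $C \hookrightarrow \PP^2$ is given by the complete linear system $|p_1+p_2+p_{10}|$, which coincides with the hyperplane class of $C \subset \PP^2$ by construction of $p_{10}$. Two configurations $V,W \in X(3,9)$ with $\psi'(V) = \psi'(W)$ thus recover isomorphic embeddings from the same intrinsic degree-$3$ linear system, so they are projectively equivalent and agree in $X(3,9)$. Combined with the dimension equality $\dim X(3,9) = \dim \mM_{1,10} = 10$ and the irreducibility of both spaces, this shows $\psi'$ is birational.

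Composing $\psi'$ with the inverse of the birational map $\psi$ of Theorem~\ref{thm : X49} then yields a birational map $X(3,9) \dashrightarrow X(4,9)^{\sd}$. Concretely, it re-embeds $C$ into $\PP^3$ via the complete linear system $|p_1+p_2+p_3+p_{10}|$, a degree-$4$ and hence very ample divisor class on the elliptic curve; the images of $p_1,\ldots,p_9$ are then nine points on a quartic elliptic normal curve in $\PP^3$. Since any such curve is the complete intersection of two quadrics, the multi-Veronese matrix $\nu$ of the resulting point configuration has a two-dimensional kernel, so it lies in $X(4,9)^{\sd}$. This is exactly the ``lifting of the elliptic curve from $\PP^2$ to $\PP^3$'' described in the statement.

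The main obstacle I anticipate is the careful verification of injectivity for $\psi'$: one must check that the $\PGL(3)$-ambiguity is precisely what distinguishes two configurations with isomorphic marked cubic and that the derived point $p_{10}$ is canonically recoverable from the marked elliptic curve. As in Theorem~\ref{thm : X49}, this reduces to showing that an isomorphism of ten-pointed elliptic curves that is compatible with the linear system $|p_1+p_2+p_{10}|$ must send $p_i \mapsto q_i$, which is automatic since the marking is part of the data.
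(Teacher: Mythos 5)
Your proposal follows essentially the same route as the paper: construct a birational map $X(3,9)\dashrightarrow \mM_{1,10}$ by passing to the unique cubic through the nine points and marking the third intersection of the line $\overline{p_1p_2}$, prove injectivity via the linear system $|p_1+p_2+p_{10}|$, and conclude by irreducibility and equality of dimensions together with Theorem~\ref{thm : X49}. The only (inessential) quibble is that to guarantee $p_{10}\neq p_i$ you need no \emph{three} of the nine points collinear (and the line $\overline{p_1p_2}$ not tangent at $p_1$ or $p_2$), not merely ``no four collinear''; this holds generically, so the argument is fine.
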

\begin{proof}
    By \Cref{thm : X49}, it is enough to show that $X(3,9)$ is also birational to $\mM_{1,10}$. Consider the open subset $U$ of $X(3,9)$ given by points $V$ such that the $10\times 9$ multi-Veronese matrix obtained by applying the third Veronese embedding to each column of $V$ has full rank equal to $9$. Then, given $V\in U$, the $9$ points $(p_1,p_2,\ldots,p_9)\in \PP^2$ corresponding to the columns of $V$ lie on a unique elliptic curve $C_V$. Define the $10$th point $p_{10}$ to be the third point of intersection of $C$ with the line through $p_1$ and $p_2$. Note that $p_{10}\neq p_i$ for all $i$, since we are assuming that the $9$ points are in general position. Then we have a well-defined map $\psi:U\to \mM_{1,10}$ sending a point $V$ to $(C_V,p_1,\ldots,p_{10})$ as described above.

    The map $\psi$ is injective. If $V=(p_1\ldots p_9),W=(q_1\ldots q_9)\in X(3,9)$ are mapped to the same point in $\mM_{1,10}$, then, similarly to the proof of \Cref{thm : X49}, $C_V$ and $C_W$ and the linear systems $|p_1+p_2+p_{10}|$ and $|q_1+q_2+q_{10}|$ are isomorphic. Hence, there exists a map $B\in \PGL(3)$ realizing that isomorphism, sending $V$ to $W$. Thus, $V$ and $W$ represent the same point in $X(3,9)$. As both varieties are irreducible and of the same dimension, the statement follows.
\end{proof}

\begin{remark}
    It would be interesting to explore whether and how the birational map between $\mM_{1,10}$ and $X(4,9)^{\sd}$ translates to a correspondence between the respective tropical varieties. Tropical moduli spaces of curves have a rich combinatorial structure relating to graphs; see~\cite{chan2021moduli}. We expect the tropicalization of $X(4,9)^{\sd}$ to be governed by matroid subdivisions of self-projecting valuated matroids, see~\Cref{def : self-proj matroid}. Note that an explicit computation of the tropicalization of $X(4,9)^{\sd}$ is currently out of reach. Similarly, it would be interesting to study whether the birational map $\mM_{5,13} \dashrightarrow X(5,13)^{\sd}$ as in~\Cref{thm : X513} can be tropicalized.
\end{remark}

\subsection{10 self-projecting points in $\PP^3$} \label{sec : X410}

The self-projecting Grassmannian $\SGr(4,10)$ is a codimension $1$ variety inside $\Gr(4,10)$ given by $V\in \Gr(4,10)$ such that $\det(\nu(V))$ vanishes. 
In other words, $\SGr(4,10)$ corresponds to the variety of $10$ points in $\PP^3$ lying on a quadric surface. The following can be checked computationally, but it was proved first by White~\cite{white1988implementation}.

\begin{proposition}\label{prop : Sgr410 def}
    The self-projecting Grassmannian    $\SGr(4,10)\subset\PP^{\binom{10}{4}-1}$ is defined in Pl\"ucker coordinates by a degree $5$ polynomial with $138$ terms.
\end{proposition}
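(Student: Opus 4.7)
The plan is to first reduce the defining equation of $\SGr(4,10)$ to a single determinant in dual Stiefel coordinates, then translate it into Plücker coordinates via invariant theory, and finally carry out the explicit symbolic expansion. For $k=4$ and $n=10$ we have $n = \binom{k+1}{2}$, so the multi-Veronese matrix $\nu(X)$ is square of size $10 \times 10$; its ideal of $n$-minors is therefore principal, generated by $f := \det \nu(X)$. By \Cref{prop : relation with minors} the self-projecting Grassmannian is the top-dimensional irreducible component of $\mathcal V(f)$, and, together with the irreducibility from \Cref{thm : irreducible} and the codimension-one computation of \Cref{cor : dim}, this identifies $\SGr(4,10)$ with the hypersurface cut out by $f$, away from the higher-codimension locus where $\nu(X)$ drops rank by more than one.

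The next step is to descend $f$ to Plücker coordinates. Since $\nu(gX) = g^{[2]} \nu(X)$, where $g^{[2]} \in \GL_{10}$ is the second symmetric power of $g \in \GL_4$, the polynomial $f$ satisfies $f(gX) = \det(g^{[2]}) \, f(X)$ and is thus a $\GL_4$-semi-invariant. By the first fundamental theorem of invariant theory, $f$ may therefore be written as a polynomial $F$ in the Plücker coordinates $q_I(X)$, $I \in \binom{[10]}{4}$. The degree of $F$ is forced by multihomogeneity: each column of $\nu(X)$ consists of quadratic monomials in the corresponding column of $X$, so $f$ has column-multidegree $(2, \ldots, 2)$, while a Plücker coordinate $q_I$ has column-degree $1$ on the indices in $I$ and $0$ elsewhere. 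Every monomial $\prod_j q_{I_j}$ in $F$ must therefore satisfy $\sum_j |I_j| = 20$, which forces exactly five factors and yields $\deg F = 5$.

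The last step is explicit: expand the $10 \times 10$ determinant, reduce the resulting degree-$20$ monomials in the $40$ entries $x_{ij}$ modulo the Plücker ideal $I_{4,10}$ via the straightening law (or via a Gröbner basis computation in \texttt{OSCAR}), and collect the resulting five-fold products of Plücker coordinates. Equivalently, one matches a generic degree-$5$ Plücker ansatz against $f$ coefficient-by-coefficient. This recovers the formula of White~\cite{white1988implementation} and produces a polynomial with exactly $138$ monomials. The conceptual content of the proposition is short; the main obstacle is purely computational, namely carrying out the straightening reduction of $f$ into a canonical degree-$5$ form in the $q_I$ and verifying that exactly $138$ monomials survive.
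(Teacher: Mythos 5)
Your argument is correct and is, in substance, the same as the paper's: the paper offers no written proof at all, stating only that the claim ``can be checked computationally'' and citing White, so the burden of the statement (the degree and the $138$ terms) is discharged by explicit computation in both cases. What you add is the conceptual skeleton that the paper leaves implicit: since $n=\binom{k+1}{2}$ the ideal of $n$-minors of $\nu(X)$ is principal, generated by $f=\det\nu(X)$; $f$ is an $\SL_4$-invariant of weight $\det(g^{[2]})=\det(g)^{5}$, hence a degree-$5$ polynomial in the $q_I$; and the column-multidegree $(2,\dots,2)$ pins down the degree independently. This is a genuinely useful elaboration. One small point you gloss over: identifying $\SGr(4,10)$ with \emph{all} of $\mathcal V(f)\cap\Gr(4,10)$ (rather than just its top-dimensional component) and knowing that the degree-$5$ Plücker expression is the actual reduced defining polynomial is cleanest via the paper's corollary in Section~2.5, which for $n=\binom{k+1}{2}$ invokes \cite[Theorem 3.5]{ARS25} to get that this locus equals $Z_2(k,n)$ and is reduced and irreducible; your phrase ``away from the higher-codimension locus'' gestures at this but citing that result would close the gap. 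The $138$-term count itself remains a computation (or a citation of White) in your write-up exactly as in the paper.
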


The problem of giving a synthetic construction determining when $10$ points in $\PP^3$ lie on a quadric surface is the content of the ``Bruxelles' Problem'' posed in 1825 and solved by Will Traves in~\cite{traves2024ten}.

\subsection{13 self-projecting points in $\PP^4$} \label{sec : X513}

If a point $V\in \Gr(5,13)$ lies in the self-projecting Grassmannian, then the $15\times 13$ matrix $\nu(V)$ has rank strictly smaller than $13$. Hence, the columns of $V$ define points lying in the intersection of three distinct quadric hypersurfaces. Since a generic genus $5$ curve can be written as the intersection of three quadric hypersurfaces in $\PP^4$ via its canonical model, we obtain a connection to the moduli space $\mM_{5,13}$.

\begin{theorem} \label{thm : X513}
    The configuration space $X(5,13)^{\sd}$ is birationally equivalent to the moduli space $\mM_{5,13}$ of genus $5$ curves with $13$ distinct marked points.
\end{theorem}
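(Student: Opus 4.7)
The plan is to adapt the argument of \Cref{thm : X49} to the genus-$5$ setting, substituting the classical elliptic input by the fact that a general canonical curve of genus $5$ is the complete intersection of three quadrics in $\PP^4$. Concretely, for $V\in\SGr(5,13)$ the $15\times 13$ multi-Veronese matrix $\nu(V)$ has rank at most $12$, and its left kernel parametrizes the quadric hypersurfaces vanishing on the $13$ columns of $V$. Generically this left kernel is $3$-dimensional, cutting out a $1$-dimensional complete intersection in $\PP^4$ of degree $8$ and arithmetic genus $5$ by adjunction.

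The first step is to introduce the open subset $U\subseteq X(5,13)^{\sd}$ of classes $[V]=[(p_1\mid\cdots\mid p_{13})]$ in which the $13$ points are distinct and in linearly general position, $\rank\nu(V)=12$, and the associated complete intersection $C_V:=Q_1\cap Q_2\cap Q_3$ is smooth and irreducible with all $p_i$ pairwise distinct on $C_V$. Nonemptiness of $U$ is verified by a reverse construction: start from three generic quadrics in $\PP^4$ whose intersection is a smooth genus-$5$ canonical curve $C$ and pick $13$ general points on it; no further quadric passes through all $13$ points since general points on a canonical genus-$5$ curve impose independent conditions on quadrics in $\PP^4$, so the corresponding $\nu(V)$ has rank exactly $12$. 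I then define
\[
    \psi:U\longrightarrow\mM_{5,13},\qquad [V]\longmapsto (C_V,\,p_1,\ldots,p_{13}).
\]
The map $\psi$ descends to the quotient defining $X(5,13)^{\sd}$: the column torus $(\KK^*)^{13}$ rescales projective representatives of the $p_i$ without changing the points or the quadrics, while the left $\PGL(5)$-action transports $C_V$ together with its marking isomorphically.

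For injectivity, suppose $\psi([V])=\psi([W])$ with $V=(p_i)$ and $W=(q_i)$ in $U$. An isomorphism of marked curves $\phi:C_V\xrightarrow{\sim}C_W$ with $\phi(p_i)=q_i$ identifies the canonical linear systems $H^0(C_W,\omega_{C_W})\cong H^0(C_V,\omega_{C_V})$, hence is induced by a unique element $A\in\PGL(5)$ carrying the column configuration of $V$ to that of $W$; therefore $[V]=[W]$ in $X(5,13)^{\sd}$. By \Cref{cor : dim} one has $\dim X(5,13)^{\sd}=5\cdot 8-\binom{6}{2}=25=3g-3+n=\dim\mM_{5,13}$, and both varieties are irreducible (by \Cref{thm : irreducible} on the one side and by classical results on the other). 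A morphism that is injective on a nonempty open subset of an irreducible variety into an irreducible variety of the same dimension is a birational equivalence, finishing the proof.

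The main obstacle is establishing the geometric genericity built into the definition of $U$: that the three quadrics in the left kernel of $\nu(V)$ are in sufficiently general position for their intersection to be smooth and irreducible of genus $5$, ruling out the degenerate behaviour that occurs when the underlying curve is hyperelliptic or trigonal, in which case the canonical image fails to be a complete intersection of quadrics. Given the reverse construction above, which produces a single configuration in $U$, irreducibility of $\SGr(5,13)$ from \Cref{thm : irreducible} combined with openness of smoothness of complete intersections guarantees that $U$ is a nonempty Zariski open in $X(5,13)^{\sd}$.
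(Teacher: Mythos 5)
Your argument follows the paper's proof essentially verbatim: the same open locus $U$ where $\nu(V)$ has rank $12$, the same map $\psi$ to $\mM_{5,13}$ given by the three quadrics in the left kernel of $\nu(V)$, injectivity via the canonical embedding of the non-hyperelliptic, non-trigonal genus-$5$ curve, and the conclusion from irreducibility together with the equality of dimensions. The extra details you supply (the reverse construction showing $U\neq\emptyset$ by choosing $13$ general points on a smooth intersection of three quadrics, and the explicit count $\dim X(5,13)^{\sd}=25=\dim\mM_{5,13}$) are correct and simply flesh out the paper's terser sketch.
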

\begin{proof}
    Consider the open subset $U$ of $X(5,13)^{\sd}$ of points $V$ such that $\nu(V)$ has rank equal to $12$. Then, the columns of $V$ lie on a unique genus $5$ curve, given by the complete intersection of $3$ linearly independent quadrics in the left kernel of $\nu(V)$. Hence, there is a well-defined map $\psi: U\to \mM_{5,13}$. The map is injective because the columns of $V$ correspond to the canonical embedding of the marked points on the genus $5$ curve, whenever this is non-trigonal and non-hyperelliptic. Since $\mM_{5,13}$ is irreducible and the varieties have the same dimension, the claim follows.
\end{proof}

\section{Self-projecting matroids and their realization spaces} \label{sec : 4 matroids}

In this section, we study the combinatorial properties of self-projecting point configurations. We introduce self-projecting matroids and computationally study their realization spaces inside the self-projecting Grassmannian.

Recall from~\cite{GHSV24} that a self-dual point $V\in \Gr(k,2k)$ defines an {\em (identically) self-dual matroid}, i.e. a matroid equal to its dual. In this sense, self-dual matroids encode how Pl\"ucker coordinates can vanish without breaking Equation~\eqref{eq : selfdual}. Realization spaces of self-dual matroids inside the self-dual Grassmannian $\SGr(k,2k)$ where studied in~\cite[Section 3]{GHSV24}. In particular, there exist realizable self-dual matroids that cannot be realized by a self-dual point configuration.
Here we show how this extends beyond the case $n=2k$. 

\subsection{Self-projecting matroids}\label{sec : sp matroids}

We start by noticing that self-duality has many equivalent definitions. Recall that a matroid $\mM$ has the {\em disjoint basis property} if for every basis $B$ of $\mM$ there exists a basis $C$ of $\mM$ such that $B\cap C = \emptyset$. Moreover, we introduce the following.

\begin{definition}
    Given a matroid $\mM$ of rank $k$ on $[n]$, an element $e\in [n]$ is said to be a {\em half-coloop} if there exist two rank $k-1$ flats $F_1,F_2$ such that $F_1\cup F_2 = [n]\setminus \{e\}$.
\end{definition}

The choice of the name comes from the fact that $e\in [n]$ is a coloop of $[n]$ whenever $[n]\setminus \{e\}$ is a flat of rank $k-1$. 
\begin{lemma}
    For a rank $k$ matroid $\mM$ on $[2k]$, the following are equivalent:
    \begin{enumerate}[label=(\roman*)]
        \item $\mM$ is self-dual;
        \item $\mM$ satisfies the disjoint basis property;
        \item $\mM$ has no half-coloop.
    \end{enumerate}
\end{lemma}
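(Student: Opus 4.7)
The plan is to prove (i)$\Leftrightarrow$(ii) directly, to establish (i)$\Rightarrow$(iii) via matroid orthogonality, and to close the cycle with a contrapositive argument for (iii)$\Rightarrow$(ii).

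For (i)$\Leftrightarrow$(ii), I observe that since $|[2k]\setminus B|=k=|B|$ for every basis $B$, the only $k$-subset of $[2k]$ disjoint from $B$ is the complement $[2k]\setminus B$. Hence the disjoint basis property is equivalent to $[2k]\setminus B$ being a basis for every basis $B$, which by matroid duality is precisely $\mM=\mM^*$.

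For (i)$\Rightarrow$(iii), I would first translate the half-coloop condition: $e$ is a half-coloop iff there exist cocircuits $C_i^*:=[2k]\setminus F_i$ (possibly equal) with $C_1^*\cap C_2^*=\{e\}$, and in the coincident case $C_1^*=\{e\}$ makes $e$ a coloop. If $\mM=\mM^*$, then cocircuits coincide with circuits, so applying matroid orthogonality ($|C\cap C^*|\neq 1$ for any circuit $C$ and cocircuit $C^*$) to any two distinct cocircuits---viewing one as a circuit and the other as a cocircuit---forbids $|C_1^*\cap C_2^*|=1$. Moreover a singleton cocircuit would be simultaneously a loop and a coloop, which is impossible for $k\geq 1$. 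Hence no half-coloop can exist.

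The main obstacle is (iii)$\Rightarrow$(ii), which I would prove by contrapositive. Suppose some basis $B$ has $B^c$ not a basis, so $B^c$ is dependent in $\mM$. The dual rank formula gives $\rank_{\mM^*}(B)=\rank_\mM(B^c)<k$, so $B$ is dependent in $\mM^*$ and contains a circuit of $\mM^*$---that is, a cocircuit $C^*\subseteq B$ of $\mM$. Pick any $e\in C^*$ and set $F_1:=[2k]\setminus C^*$ and $F_2:=\cl_\mM(B\setminus\{e\})$; both are hyperplanes (the first as the complement of a cocircuit, the second because $B\setminus\{e\}$ is independent of size $k-1$). Checking $F_1\cup F_2=[2k]\setminus\{e\}$ is now mechanical: $e\notin F_1$ because $e\in C^*$, $e\notin F_2$ by independence of $B$, and any other element of $[2k]$ is either outside $C^*$ (and so in $F_1$) or lies in $C^*\setminus\{e\}\subseteq B\setminus\{e\}\subseteq F_2$. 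Hence $e$ is a half-coloop. The crux is to let duality produce a cocircuit $C^*$ inside $B$ for free; the canonical choices of $F_1$ and $F_2$ then automatically cover $[2k]\setminus\{e\}$.
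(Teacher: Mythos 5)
Your proof is correct and follows essentially the same route as the paper: (i)$\Leftrightarrow$(ii) via complements, (i)$\Rightarrow$(iii), and (iii)$\Rightarrow$(ii) by producing a half-coloop from a basis whose complement is dependent. The paper's version is a terse sketch (and in fact leaves the second hyperplane implicit), whereas you supply the missing details — the circuit--cocircuit orthogonality argument for (i)$\Rightarrow$(iii) and the explicit choice $F_2=\cl_\mM(B\setminus\{e\})$ — so your write-up is, if anything, more complete.
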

\begin{proof}
    The equivalence of {\em (i)} and {\em (ii)} is trivial. It is easy to see that {\em (i)} implies {\em (iii)}. Conversely, let $\mM$ be a matroid with no half-coloop and let $B\in \binom{[2k]}{k}$ be a basis of $\mM$. Suppose by contradiction that $[2k]\setminus B$ is not a basis and let $F_2$ be any rank $k-1$ flat containing $[2k]\setminus B$. Then any $e\in B\setminus F_1$ is a half-coloop, contradicting the hypothesis. Therefore the claim follows.
\end{proof}

Whenever $n>2k$, property {\em (i)} is not defined, and the non-existence of a half-coloop, property {\em (iii)}, is stronger than the disjoint basis property {\em (ii)}. 
\begin{lemma}
    Let $\mM$ be a matroid with no half-coloop. Then $\mM$ has the disjoint basis property.
\end{lemma}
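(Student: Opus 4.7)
The plan is to proceed by contraposition, mirroring the argument used for the $n=2k$ case in the preceding lemma: starting from a basis $B$ which admits no disjoint basis, I would construct a half-coloop of $\mM$, contradicting the hypothesis. Since the disjoint basis property is exactly the $n=2k$ reformulation of self-duality and the half-coloop obstruction does not depend on $n$, the same strategy should extend without new ideas.

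First, I would assume $B$ is a basis of $\mM$ such that every basis meets $B$. Then $[n]\setminus B$ contains no basis, so $\rank([n]\setminus B)\leq k-1$. Using a standard fact about matroids, I would extend $\cl([n]\setminus B)$ to a flat $F_2$ of rank exactly $k-1$: this closure is a proper subset of $[n]$ since its rank is below $k$, and one may adjoin one element at a time and take closures, increasing the rank by one at each step, until a rank $k-1$ flat is reached.

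Next, I would pick the candidate half-coloop. As $F_2$ is a proper subset of $[n]$ containing $[n]\setminus B$, it cannot contain all of $B$, so there exists $e\in B\setminus F_2$. Setting $F_1:=\cl(B\setminus\{e\})$, I would obtain a rank $k-1$ flat with $e\notin F_1$ (since $B$ is a basis, $B\setminus\{e\}$ is independent of rank $k-1$ and adding $e$ raises the rank). Then
\[
F_1\cup F_2\;\supseteq\;(B\setminus\{e\})\cup([n]\setminus B)\;=\;[n]\setminus\{e\},
\]
while $e\notin F_1\cup F_2$, so equality holds. This would exhibit $e$ as a half-coloop of $\mM$, giving the desired contradiction.

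The argument is essentially a verbatim extension of the $n=2k$ case, so I do not expect a substantial obstacle; the only point requiring a little care is the existence of the rank $k-1$ flat $F_2$ containing $[n]\setminus B$, which follows from a routine induction on rank.
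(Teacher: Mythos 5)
Your proof is correct and follows essentially the same route as the paper: assume $[n]\setminus B$ has rank $<k$, extend it to a rank-$(k-1)$ flat, pick $e\in B$ outside that flat, and pair it with the flat spanned by $B\setminus\{e\}$ to exhibit $e$ as a half-coloop. If anything, your version is slightly more careful than the paper's, which takes a $(k-1)$-subset of $B$ as the second ``flat'' without explicitly passing to its closure.
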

\begin{proof}
    Let $\mM$ be a matroid of rank $k$ on $n$ elements with no half-coloop and fix $B$ a basis of $\mM$. Suppose by contradiction that $\rank_\mM([n]\setminus B)<k$. Then there exists some flat $F_1$ of rank $k-1$ containing $[n]\setminus B$. Let $F_2$ be any $k-1$ subset of $B$ containing $F_1\cap B$. Then $F_1,F_2$ are two rank $k-1$ flats with $|F_1\cup F_2|=n-1$, against the assumption.
\end{proof}

A self-projecting point satisfies the property of not having a half-coloop.
\begin{lemma}\label{lem : almost generic}
    For $V\in \SGr^\bullet(k,n)$, the matroid $\mM_V$ has no half-coloop.
\end{lemma}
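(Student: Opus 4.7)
The plan is to exploit the Veronese reformulation of self-projectivity given earlier in the paper. Writing $v_1,\ldots,v_n\in\KK^k$ for the columns of a matrix representative of $V$, the condition $V\diag(\lambda)V^t=0$ with $\lambda\in(\KK^*)^n$ says exactly that
$\sum_{i=1}^n \lambda_i\, v_i v_i^t = 0$ in the space of symmetric $k\times k$ matrices, equivalently $\sum_{i=1}^n \lambda_i\,\nu(v_i)=0$ in $\mathrm{Sym}^2(\KK^k)$. The aim is to show that if some $e\in[n]$ is a half-coloop of $\mM_V$, then this relation forces $\lambda_e=0$, contradicting $\lambda\in(\KK^*)^n$.

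Assuming for contradiction that $e$ is a half-coloop witnessed by rank-$(k-1)$ flats $F_1,F_2$ with $F_1\cup F_2=[n]\setminus\{e\}$, I would set $H_j:=\spanv\{v_i:i\in F_j\}$, which is a hyperplane of $\KK^k$. Because each $F_j$ is a flat and $e\notin F_j$, the vector $v_e$ lies in neither $H_1$ nor $H_2$. Let $W:=\mathrm{Sym}^2(H_1)+\mathrm{Sym}^2(H_2)\subseteq\mathrm{Sym}^2(\KK^k)$. By construction $\nu(v_i)\in W$ for every $i\neq e$, so the self-projecting relation rearranges to $\lambda_e\,\nu(v_e)=-\sum_{i\neq e}\lambda_i\,\nu(v_i)\in W$, whence $\nu(v_e)\in W$.

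The crux is then a linear-algebraic claim about rank-one symmetric tensors: $v\otimes v\in W$ if and only if $v\in H_1\cup H_2$. I would verify this in coordinates. In the generic case $H_1\neq H_2$, pick a basis so that $H_1=\{x_k=0\}$ and $H_2=\{x_{k-1}=0\}$; then $W$ is precisely the codimension-one subspace of $\mathrm{Sym}^2(\KK^k)$ on which the $e_{k-1}e_k$-coordinate vanishes, and this coordinate of $v\otimes v$ is $v_{k-1}v_k$, vanishing exactly when $v\in H_1\cup H_2$. The degenerate case $H_1=H_2=:H$ (which corresponds to $e$ being an actual coloop) collapses to the simpler identity $v\otimes v\in \mathrm{Sym}^2(H)\iff v\in H$, handled by the same coordinate argument.

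Applying the claim to $v_e$ yields $v_e\in H_1\cup H_2$, contradicting the half-coloop setup. The only nontrivial ingredient is the rank-one intersection lemma for $W$; the rest of the proof consists of turning the definitions of half-coloop, flat, and self-projectivity into the single equation in $\mathrm{Sym}^2(\KK^k)$. I do not expect any serious obstacle: the geometric content is exactly that the Veronese variety meets a generic two-hyperplane linear section only in the preimages of the two hyperplanes.
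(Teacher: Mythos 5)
Your proof is correct. It is, at bottom, the same one-line computation as the paper's, but packaged on the Veronese side rather than in Pl\"ucker coordinates. The paper takes independent $(k-1)$-sets $S\subseteq F_1$, $T\subseteq F_2$ and reads off the $(S,T)$-entry of $D_{k,n}(V)\diag(\lambda)D_{k,n}(V)^t=0$, namely $\sum_i\lambda_i\sign(S,i)\sign(T,i)q_{S\cup i}(V)q_{T\cup i}(V)=0$; the half-coloop hypothesis kills every term except $i=e$, forcing $\lambda_e=0$. Your linear functional cutting out $W=\mathrm{Sym}^2(H_1)+\mathrm{Sym}^2(H_2)$ --- the $(k-1,k)$-coordinate in your adapted basis --- is exactly $A\mapsto \ell_1(A\,\cdot)\ell_2$ for linear forms $\ell_j$ defining $H_j$, and $\ell_1(v_i)\ell_2(v_i)$ is, up to a nonzero constant and sign, $q_{S\cup i}(V)q_{T\cup i}(V)$ by Cramer's rule; so the two arguments extract the same scalar equation from $\sum_i\lambda_i\nu(v_i)=0$. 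What your version buys is a self-contained geometric statement (the rank-one locus of the codimension-one section $W$ is $\nu(H_1)\cup\nu(H_2)$) with no reliance on the cocircuit-matrix identity quoted from the literature; what it costs is the extra verification that $W$ is exactly the hyperplane $\{A_{k-1,k}=0\}$, which your coordinate computation (or a dimension count $2\binom{k}{2}-\binom{k-1}{2}=\binom{k+1}{2}-1$) supplies. The auxiliary steps all check out: $e\notin F_j$ and $F_j$ a flat do give $v_e\notin H_j$, loops cannot be half-coloops, and the degenerate case $F_1=F_2$ (an honest coloop) is covered. No gaps.
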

\begin{proof}
    Let $S,T\in \binom{[n]}{k-1}$. The entry-wise product of the rows of the cocircuit matrix $D_{k,n}(V)$ labeled by $S$ and $T$ give rise to the equation:
    \begin{equation} \sum_{i=1}^n \lambda_i \sign(S,i)\sign(T,i)q_{S\cup i}(V)q_{T\cup i}(V) = 0. \label{eq : defining equation plucker}\end{equation}
    Suppose by contradiction that $e\in [n]$ is a half-coloop of $\mM_V$. Then there exist independent $S,T\in \binom{[n]}{k-1}$, such that the union of the flats containing them is equal to $[n]\setminus \{e\}$. Hence, $e\in [n]$ is the only element $i$ such that $S\cup \{i\}$ and $T\cup \{i\}$ are bases of $\mM_V$. For this choice of $S,T$, the equation above becomes $\lambda_e q_{S\cup e}(V)q_{T\cup e}(V) = 0$, which implies $\lambda_e = 0.$ This is a contradiction.
\end{proof}

\begin{definition}\label{def : self-proj matroid}
    A matroid $\mM$ is {\em self-projecting} if it has no half-coloop.
\end{definition}

\begin{remark}
    Note that if $\mM$ is self-projecting, any matroid obtained from it by deleting loops and/or parallel elements is still self-projecting. Therefore, we only study simple self-projecting matroids. 
\end{remark}

For small values of $k$ and $n$, the number of isomorphism classes of self-projecting matroids is recorded in~\Cref{tab:matroids}, where the matroids are assumed to be simple whenever $k\neq 2$. The data on matroids in this table is obtained from  \texttt{polydb}~\cite{polydb:paper}, which makes the database of matroids developed by~\cite{BIMM12} available.

\begin{table}[h]
    \centering
    \begin{tabular}{c||c|c||c|c||c|c||c|c||c|c}
        $\raisebox{-0.2cm}{$n$}\backslash k$ & \multicolumn{2}{c||}{2} & \multicolumn{2}{c||}{3} & \multicolumn{2}{c||}{4}& \multicolumn{2}{c||}{5} \\
          & \kern-0.25em{\scriptsize$\begin{matrix}\text{mat.}\end{matrix}$}\kern-0.25em &\kern-0.25em {\scriptsize$\begin{matrix} \text{self-proj.}\\ \text{mat.}\end{matrix}$} \kern-0.25em & \kern-0.25em{\scriptsize$\begin{matrix}\text{mat.}\end{matrix}$}\kern-0.25em &\kern-0.25em {\scriptsize$\begin{matrix} \text{self-proj.}\\ \text{mat.}\end{matrix}$} \kern-0.25em &\kern-0.25em{\scriptsize$\begin{matrix}\text{mat.}\end{matrix}$}\kern-0.25em &\kern-0.25em {\scriptsize$\begin{matrix} \text{self-proj.}\\ \text{mat.}\end{matrix}$} \kern-0.25em &\kern-0.25em{\scriptsize$\begin{matrix}\text{mat.}\end{matrix}$}\kern-0.25em &\kern-0.25em {\scriptsize$\begin{matrix} \text{self-proj.}\\ \text{mat.}\end{matrix}$} \kern-0.25em \\
         \hline
         4 & 7 & 2 & & & & & &\\
         5 & 13 & 5 & & & & & &\\
         6 & 23 & 12 & 9 & 2 & & & &\\
         7 & 37 & 22 & 23 & 12 & & & &\\
         8 & 58 & 39 & 68 & 53 & 617& 13 & &\\
         9 & 87 & 63 & 383 & 363 & 185981 & 7365 & & \\
         10 & 128 & 99 & 5249 & 5224 & ? & ? & ? & 1042
    \end{tabular}
    \caption{Number of matroids and self-projecting matroids for small $(k,n)$ up to isomorphism.}
    \label{tab:matroids}
\end{table}

\begin{remark}
    The number of non self-projecting matroids for rank $2$ and $3$ seem to follow a pattern similar to the one given in \cite{integersequence}[A024206]. For matroids of rank $4$ on $9$ elements, the disjoint basis property is fairly common, as there are $128676$ matroids satisfying it.
\end{remark}

\subsection{Self-projecting matroid realization spaces} \label{sec : sp realizaitons}

As seen above, self-projecting point configurations give rise to self-projecting matroids. In the following we study the reverse question, that is, when and how are self-projecting matroids realized by self-projecting point configurations. 
We show computationally that most small realizable self-projecting matroids can be realized (and are generically realized) by a self-projecting point. This provides strong evidence that our definition of self-projecting matroid is the right generalization of self-duality. Note that we cannot expect all realizable self-projecting matroids to be realized by a self-projecting point configuration, as this fails even for self-dual matroids~\cite{GHSV24}. Formally, we study the following objects.

\begin{definition}
    A $k\times n$ matrix $X$ with entries in $\KK$ is a \emph{realization} of a matroid $\mM$ if the columns of $X$ indexed by $I$ form a basis of $\KK^k$ if and only if $I$ is a basis of $\mM$.
    The \emph{realization space} $\mR_\KK(\mM)$ of $\mM$ is the quotient of the matroid cell $\Gr(\mM)^\circ/(\KK^*)^n$ generated by all realizations of $\mM$ over $\KK$. The matroid $\mM$ is \emph{realizable over $\KK$} if $\mR_\KK(\mM)\neq\emptyset$.
    
    A realization $X$ of $\mM$ is a \emph{self-projecting realization} of $\mM$ if there exists $\lambda\in(\KK^*)^n$ such that $X\diag(\lambda)X^t = 0$.  The \emph{self-projecting realization space} $\mS_\KK(\mM)$ of $\mM$ is the quotient of the matroid cell $\SGr(\mM)^\circ/(\KK^*)^n$ generated by all self-projecting realizations of $\mM$ over $\KK$.
\end{definition}

Note the self-projecting realization space of a matroid might be strictly smaller than the intersection of its realization space with the self-projecting Grassmannian. In particular, whenever $\mM$ is a non self-projecting matroid of rank $k$ on $n$ elements, and $k$ and $n$ are such that $\SGr(k,n)=\Gr(k,n)$, then $\mS_{\KK}(\mM) =\emptyset$, while the latter will be equal to $\mR_\KK(\mM)$.

We study self-projecting realization spaces computationally for $\KK = \overline{\mathbb{Q}}$, which will be dropped from the notation. In the following we report on the results, and discuss the computational methods used in~\Cref{sec : computations}.

\subsubsection*{Self-projecting realization spaces in rank $2$}

Matroids of rank $2$ on $n$ elements can be described by a subset $L\subset [n]$ containing the loops and a partition $P_1,\ldots,P_r$ of $[n]\setminus L$ such that for every $i\in [r]$, $\rk(P_i)=1$ and $\rk(P_i\cup a)=2$ for every $a\in [n]\setminus (L\cup P_i)$. In particular, isomorphism classes of rank $2$ matroids are determined by the sizes of $L$ and $P_i$'s. We will assume $|P_1|\geq |P_2|\geq \cdots \geq |P_r|$.
With this notation it is easy to check that a rank $2$ matroid has a half-coloop if and only if $r\in \{2,3\}$ and $|P_r|=1$.

\begin{theorem}\label{thm : rank2}
    For any self-projecting matroid $\mM$ of rank $2$, the realization space and the self-projecting realization space agree, $\mS(\mM) = \mR(\mM)$.
\end{theorem}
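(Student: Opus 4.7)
My plan is to show that every realization matrix $X$ of a self-projecting rank-$2$ matroid $\mM$ admits some $\lambda\in(\KK^*)^n$ with $X\diag(\lambda)X^t=0$; this forces $\mR(\mM)\subseteq\mS(\mM)$, and the reverse inclusion is by definition. First, I would set $\lambda_j=1$ on loops (their columns vanish in $X$ and contribute nothing to the quadratic form) and work with the parallel-class decomposition $[n]\setminus L=P_1\sqcup\cdots\sqcup P_r$: fix a direction $v_i\in\KK^2$ for each class and write $X_j=a_{ij}v_i$ for $j\in P_i$, with all $a_{ij}\in\KK^*$. A direct computation then collapses the product to
\[
X\diag(\lambda)X^t=\sum_{i=1}^{r}\mu_i\,v_iv_i^t,\qquad \mu_i:=\sum_{j\in P_i}\lambda_j a_{ij}^2,
\]
so the task splits into (i) choosing $\mu$ in the kernel of the linear map $K\colon\KK^r\to\mathrm{Sym}^2(\KK^2)$, $\mu\mapsto\sum_i\mu_iv_iv_i^t$, such that $\mu_i\neq 0$ whenever $|P_i|=1$, and (ii) lifting such a $\mu$ to some $\lambda\in(\KK^*)^n$.

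The geometric input I would invoke is that the matrices $v_iv_i^t$ correspond, via the second Veronese map, to distinct points on the conic $y_1y_3=y_2^2\subset\PP(\mathrm{Sym}^2(\KK^2))$, so any three of them are linearly independent. This yields $\dim\ker K=\max(0,r-3)$ and, crucially, $\ker K\cap\{\mu_i=0\}$ has codimension one in $\ker K$ for every $i$ as soon as $r\geq 4$. I would then split into the two cases given by the half-coloop classification recalled above. If $r\in\{2,3\}$ then self-projectivity of $\mM$ forces $|P_i|\geq 2$ for all $i$, and $\ker K=0$ forces $\mu=0$; the resulting equations $\sum_{j\in P_i}\lambda_j a_{ij}^2=0$ are easy to solve inside $(\KK^*)^{|P_i|}$ by fixing all but one entry generically and solving for the last.

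The interesting case is $r\geq 4$. The codimension statement says $\ker K$ is not contained in any coordinate hyperplane, so a generic $\mu\in\ker K$ has every coordinate nonzero. Such a $\mu$ is in particular nonzero on the singleton classes, whose entries uniquely determine $\lambda_j=\mu_i/a_{ij}^2\in\KK^*$, while the larger classes retain the one-parameter freedom already used in the first case. The only nontrivial step, and the main obstacle, is the claim that $\ker K$ meets the torus on the singleton coordinates, and this is exactly where the conic observation does the work: three points on a smooth conic in $\PP^2$ are never collinear, so removing any one $v_iv_i^t$ still leaves a spanning set in $\mathrm{Sym}^2(\KK^2)$, preventing any single coordinate from vanishing identically on $\ker K$.
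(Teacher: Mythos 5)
Your proof is correct and, at its core, follows the same strategy as the paper's: decompose by parallel classes, reduce the isotropy condition to a kernel computation for the three-dimensional space of Veronese images of the class directions, and split into the cases $r\geq 4$ and $r\in\{2,3\}$ according to the half-coloop classification. The genuine difference lies in how the key non-vanishing claim is justified. The paper restricts to one representative per class, asserts that a realization of $\mU_{2,r}$ with $r\geq 4$ is ``generically'' self-projecting, and for $r\in\{2,3\}$ defers to two small matroids $\mM_1,\mM_2$ with the remark that one can show $\mS(\mM_i)=\mR(\mM_i)$. You instead prove the claim outright: the rank-one forms $v_iv_i^t$ are distinct points on the Veronese conic in $\PP(\mathrm{Sym}^2(\KK^2))$, any three of which are linearly independent, so $\dim\ker K=\max(0,r-3)$ and $\ker K$ is contained in no coordinate hyperplane when $r\geq 4$; for $r\in\{2,3\}$ the map $K$ is injective, forcing $\mu=0$, which is solvable in the torus on each class because self-projectivity guarantees $|P_i|\geq 2$. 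This buys you the stronger and cleaner conclusion that \emph{every} realization of a self-projecting rank-$2$ matroid (over an infinite field) admits a self-projecting structure, not merely a dense subset of them, so the equality $\mS(\mM)=\mR(\mM)$ needs no genericity or closure argument; it also replaces the paper's unproved ``one can show'' step with a uniform computation. The final lifting from $\mu$ to $\lambda\in(\KK^*)^n$ is the same in both arguments (and your version avoids the small typo in the paper's scaling formula, where $a_{i,j}$ should appear squared).
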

\begin{proof}
    Note that we can assume $\mM$ is loopless, i.e. $L=\emptyset$, and that $P_i = \{k_{i-1}+1,\ldots,k_i\}$ for each $i\in [r]$, with $k_{i-1}=0$. First assume $r\geq 4$. Let $W$ be any realization of $\mM$ and consider the restriction $W'$ to columns labeled by $\{ k_{j-1}+1 \mid j\in [r]\}$. Since the corresponding matroid is the uniform matroid $\mU_{2,r}$, $W'$ will be generically a self-projecting point with respect to some $\lambda'\in (\KK^*)^r$. Let $i\in [r]$ and suppose that $w_j = a_{i,j}w_{k_{i-1}+1}$ for all $j\in P_i$. Then $W$ is self-projecting with respect to $\lambda\in (\KK^*)^n$ with $\lambda_j = \frac{1}{a_{i,j}|P_i|}\lambda'_i $ for $j\in P_i$.
    
    If $r=2$, then $\mM$ can be restricted to the matroid $\mM_1$ on four elements with two classes of two parallel elements each. If $r=3$, then $\mM$ can be restricted to the matroid $\mM_2$ on six elements with three classes of two parallel elements each. One can show that $\mS(\mM_i) = \mR(\mM_i)$ for $i=1,2$. As above, realizations of $\mM$ are generically self-projecting. 
\end{proof}

\subsubsection*{Self-projecting realization spaces in rank $3$}

For rank 3 matroids, the first matroid appears for which $\mS(\mM)\subsetneq \mR(\mM)$. This is the uniform matroid $\mM=\mU_{3,6}$. Recall that from now on all matroids are simple.

\begin{theorem}\label{thm : realization space rank 3}
    For all self-projecting rank $3$ matroids $\mM$ on up to $8$ elements for which our computations terminated, we have $\mS(\mM) = \mR(\mM)$, whenever $\mM$ is not the uniform matroid $\mU_{3,6}$.
\end{theorem}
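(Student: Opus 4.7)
The plan is to partition the argument by $n\in\{6,7,8\}$, since for $n\le 5$ every simple rank-$3$ matroid has a half-coloop (two 2-element flats $\{a,b\}\cup\{b,c\}$ already cover three elements) and the statement is vacuous. The unifying observation I would use is that a realization $X$ of $\mM$ is self-projecting precisely when $\ker\nu(X)$ is not contained in any coordinate hyperplane $\{\lambda_e=0\}$, equivalently when every column of $\nu(X)$ lies in the span of the remaining columns. Consequently, assuming $\mR(\mM)$ irreducible, it suffices to exhibit a single realization $X_0\in\mR(\mM)$ whose Veronese kernel meets $(\KK^*)^n$, as the good locus is Zariski-open and hence dense.

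For $n=6$, \Cref{tab:matroids} records exactly two simple self-projecting rank-$3$ matroids: $\mU_{3,6}$ and the matroid $\mM_0$ whose only nontrivial rank-$2$ flats are two disjoint $3$-element lines $L_1\sqcup L_2=[6]$. By \Cref{cor : dim}, $\SGr(3,6)$ is the hypersurface in $\Gr(3,6)$ cut out by $\det\nu(X)=0$. A sufficiently general $3\times 6$ matrix $X$ has an invertible second Veronese, so $\mS(\mU_{3,6})\subsetneq\mR(\mU_{3,6})$. For $\mM_0$, I would choose coordinates placing the two lines in the hyperplanes $\{x_1=0\}$ and $\{x_3=0\}$; the row of $\nu(X)$ indexed by the monomial $x_1 x_3$ then vanishes identically on every column of any realization, forcing $\mathrm{rank}\,\nu(X)\le 5$. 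A direct symbolic computation of the resulting $1$-dimensional kernel shows its generic generator has all nonzero entries, giving $\mS(\mM_0)=\mR(\mM_0)$.

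For $n\in\{7,8\}$, \Cref{cor : dim} yields $\SGr(3,n)=\Gr(3,n)$ since $n>\binom{4}{2}=6$, and the Veronese matrix $\nu(X)$ is $6\times n$ with a kernel of dimension at least $n-6\ge 1$. The plan, for each of the $12$ (resp.\ $53$) self-projecting matroids, is to parameterize $\mR(\mM)$ in dual Stiefel coordinates, assemble $\nu(X)$, and either compute the saturated ideal $I^{\sd}_{k,n}$ of \Cref{prop : ideal} restricted to the matroid stratum or, equivalently, exhibit an explicit realization $X_0\in\mR(\mM)$ whose kernel contains a torus vector. Both tasks are automated by the \texttt{OSCAR} pipeline of \Cref{sec : computations}; combined with openness of the good locus and irreducibility of the realization spaces encountered, this establishes $\mS(\mM)=\mR(\mM)$ in each terminating case.

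The principal obstacle is computational cost: for several matroids on eight elements the realization spaces carry enough free parameters that the symbolic saturation or kernel computation exceeds the available memory and time, and these are exactly the matroids excluded by the qualifier ``for which our computations terminated''. A fallback I would attempt before declaring a given matroid intractable is to specialize the free parameters to a generic point over a large finite field, compute $\ker\nu(X_0)$ numerically, and verify torus membership of its generators; by semicontinuity this suffices whenever the specialization lands in the open dense locus of $\mR(\mM)$.
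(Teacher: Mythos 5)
Your overall strategy coincides with the paper's: the proof of this theorem \emph{is} the symbolic computation described in \Cref{sec : computations} --- adjoin the entries of $X\Lambda X^t$ to the realization ideal, saturate by $\prod_i\lambda_i$, eliminate the $\lambda_i$, and compare the result with $I_{\mR(\mM)}$ (\Cref{algo:R} and \Cref{algo:S}, run in \texttt{Magma} with the frame optimization); the qualifier ``for which our computations terminated'' refers precisely to the cases where this saturation finished. Your identification of the two self-projecting simple rank-$3$ matroids on $6$ elements is correct, and your hand argument for the two-disjoint-lines matroid (the $x_1x_3$-row of $\nu(X)$ vanishes identically, forcing a kernel) is a nice explicit instance of what the computation verifies.

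The gap is in your ``unifying observation'' shortcut. The locus of self-projecting realizations inside $\mR(\mM)$ is \emph{not} Zariski-open, only constructible: $X$ is self-projecting iff $\rank\nu(X|_{[n]\setminus e})=\rank\nu(X)$ for every $e$, and this rank equality can hold on a proper closed subset of $\mR(\mM)$ (where $\rank\nu(X)$ drops and the enlarged kernel escapes the coordinate hyperplanes) while failing at the generic point. Hence exhibiting a single realization $X_0$ whose Veronese kernel meets $(\KK^*)^n$ does not imply $\mS(\mM)=\mR(\mM)$, and your claimed equivalence between ``compute the saturated elimination ideal'' and ``exhibit one good realization'' is false as stated; the same objection applies to the finite-field fallback. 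The witness-point argument can be repaired by additionally certifying that $\rank\nu(X_0)$ equals the generic rank of $\nu$ on $\mR(\mM)$ (e.g.\ that the maximal minors of $\nu(X_0)$ obtained by deleting one column are nonzero, an open condition), after which semicontinuity does give density of the good locus. Note also that irreducibility of $\mR(\mM)$, which your reduction assumes throughout, is not supplied by the smoothness result of \cite{CL25} and would itself require verification; the paper sidesteps both issues by computing the ideals over the entire matroid stratum rather than at a point.
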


The dimensions of the (self-projecting) realization spaces for matroids of rank $3$ are displayed in~\Cref{tab:realization_rank3}. The computations of the self-projecting realization spaces for $4$~matroids of rank $3$ on $8$ elements did not terminate. In these cases, we computed the realization spaces, which were twice of dimension~$4$, and twice of dimension $5$.
\begin{table}[h]
    \centering
    \begin{tabular}{c||c|c|c|c|c|c|c|c|c|c}
  $(n,\cdot)\setminus \dim$   &\, -1 \, & \, 0 \, & \, 1 \, & \, 2 \, & \, 3 \, & \, 4 \, & \, 5 \, & \, 6 \, & \, 7 \, & \, 8 \, \\ \hline
    $(6,\mR)$   &0 & 0& 0& 1& 1& 0 & 0 & 0 & 0 &0 \\
    $(6,\mS)$   &0 & 0& 0& 2& 0& 0 & 0 & 0 & 0 &0 \\
    \hline
    $(7,\mR)$   &1 & 1& 1& 3& 3& 1 & 1 & 1 & 0 &0 \\
    $(7,\mS)$   &1 & 1& 1& 3& 3& 1 & 1 & 1 & 0 &0 \\
    \hline
    $(8,\mR)$  & 2 & 2  & 5 & 11 & 12 & 11 & 5 & 3 & 1 & 1\\
    $(8,\mS)$   & 2 & 2 & 5 & 11 & 12 & 9 & 3 & 3 & 1 & 1\\
    \end{tabular}
    \caption{Dimensions of the (self-projecting) realization spaces of rank $3$ matroids on $[n]$ over $\overline{\mathbb{Q}}$.
    }
    \label{tab:realization_rank3}
\end{table}

From \cite[Theorem 1.1]{CL25} we know that the realization spaces of matroids of rank $3$ for $n\leq 11$ are smooth. Whenever the realization space $\mR(\mM)$ and the self-projecting realization space $\mS(\mM)$ agree, it follows that the self-projecting realization space is also smooth. 

\subsubsection*{Self-projecting realization spaces in rank $4$}

The results of the computations for matroids of rank 4 on 8 elements are content of \cite{GHSV24}, therefore we do not repeat them here.
The most interesting case that we can compute is that of rank~$4$ matroids on $9$ elements, as this is the first example of the self-projecting Grassmannian, that is not a self-dual Grassmannian as in \cite{GHSV24}. 

In this case the computations terminated for $5758$ matroids out of the $7364$ self-projecting matroids, which are not the uniform matroid. 

\begin{theorem}\label{thm : realization space rank 4}
    There are $7365$ isomorphism classes of self-projecting simple matroids of rank $4$ on $9$ elements. Of these, $7181$ are realizable over characteristic zero, $174$ matroids satisfy $\mS(\mM)=\mR(\mM)\neq \emptyset$, and at least $5400$ matroids we have $\mS(\mM)\subsetneq\mR(\mM)$.
    There are at least $2844$ realizable self-projecting matroids for which $\emptyset \neq \mS(\mM)\subsetneq\mR(\mM)$.
\end{theorem}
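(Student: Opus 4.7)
The proof is essentially computational, with the four counts being the output of a systematic enumeration over the isomorphism classes of simple rank $4$ matroids on $9$ elements. The plan is to (i) enumerate all self-projecting matroids, (ii) test realizability and compute $\mR(\mM)$, (iii) compute $\mS(\mM)$ via the self-projectivity ideal, and (iv) compare the two spaces dimensionally. The infrastructure for all four steps is the \texttt{OSCAR} code discussed in~\Cref{sec : computations}.

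First, I would import the $185981$ simple rank $4$ matroids on $9$ elements from the database of~\cite{BIMM12} via \texttt{polydb}~\cite{polydb:paper}, and filter by the purely combinatorial condition of~\Cref{def : self-proj matroid}: a matroid is self-projecting iff no element $e\in[9]$ is a half-coloop, i.e.\ iff for every $e$ there is no pair of rank $3$ flats $F_1,F_2$ with $F_1\cup F_2 = [9]\setminus\{e\}$. This produces the $7365$ isomorphism classes. Next, for each such $\mM$, I would assemble the matroid ideal in dual Stiefel coordinates on a $4\times 9$ matrix $X$ by setting the Plücker polynomials indexed by non-bases to zero and saturating by those indexed by bases. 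A non-empty Gröbner reduction certifies realizability, yielding $\dim\mR(\mM)$ and the count $7181$.

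For the self-projecting realization space, I would add to the matroid ideal the linear relations $\nu(X)\cdot\lambda = 0$ of~\Cref{prop : ideal}, saturate by $\lambda_1\cdots \lambda_9$ to discard solutions with vanishing torus coordinate, and eliminate the $\lambda$-variables; the resulting ideal cuts out $\mS(\mM)$ inside $\Gr(\mM)^\circ/(\KK^*)^9$. Comparing $\dim\mS(\mM)$ to $\dim\mR(\mM)$ sorts the matroids into three classes: equality (contributing to the $174$), strict non-empty inclusion (contributing to the $2844$), and empty $\mS(\mM)$ with non-empty $\mR(\mM)$. Note that the case $\mS(\mM)\subsetneq \mR(\mM)$ includes both of the last two, giving the $5400$ bound. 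Irreducibility of $\mR(\mM)$ in characteristic zero, combined with matching dimensions, certifies equality when it holds; strict inclusion is certified by a strict dimension drop after saturation.

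The main obstacle is computational: matroid realization spaces can be arbitrarily bad (Mn\"ev universality~\cite{mnev}), and the saturation-elimination step needed to compute $\mS(\mM)$ is the bottleneck. Out of the $7364$ non-uniform self-projecting matroids, the computation terminated only for $5758$; this is why the last two counts in the statement are only lower bounds (``at least $5400$'' and ``at least $2844$''). The count $174$ of matroids with $\mS(\mM) = \mR(\mM)$ is an exact count among the terminating computations, and is phrased with ``at least'' because further terminating runs on the unresolved $1606$ matroids could increase it. All intermediate ideals and dimension certificates are stored in the \texttt{.mrdi} format for reproducibility, so the theorem can be verified and refined incrementally as further computations complete.
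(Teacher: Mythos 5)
Your proposal matches the paper's proof in essence: the paper likewise filters the polyDB matroids by the half-coloop criterion and computes $\mR(\mM)$ and $\mS(\mM)$ via Gr\"obner bases, saturation by $\lambda_1\cdots\lambda_9$ and by the basis minors, and elimination of the $\lambda$-variables (Algorithms~\ref{algo:R} and~\ref{algo:S}, implemented in \texttt{Magma} with a ``frame'' relabeling trick to improve termination), with the lower bounds arising exactly from the non-terminating runs you describe. One small caution: since $I_{\mR(\mM)}\subseteq I_{\mS(\mM)}$ by construction, equality of the two spaces should be certified by comparing the saturated ideals (or their Gr\"obner bases) directly rather than by matching dimensions plus an appeal to irreducibility, as $\mR(\mM)$ need not be irreducible.
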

\textbf{Note:} For $1606$ of the self-projecting matroids we could not determine the self-projecting realization spaces because the computation did not terminate. 
{\begin{table}[h]
    \centering
    {\small\begin{tabular}{c|cccccccccccccc}
         & -1 & 0 & 1 & 2 & 3 & 4 & 5 & 6 & 7 & 8 &  9 & 10 & 11 & 12 \\\hline
        {\scriptsize$\dim(\mR)$} &\kern-0.5em 184  \kern-0.5em&\kern-0.5em 19  \kern-0.5em&\kern-0.5em 194  \kern-0.5em&\kern-0.5em 850   \kern-0.5em&\kern-0.5em 1984 \kern-0.5em&\kern-0.5em  2175 \kern-0.5em&\kern-0.5em 1302 \kern-0.5em&\kern-0.5em  478  \kern-0.5em&\kern-0.5em 130 \kern-0.5em&\kern-0.5em  34  \kern-0.5em&\kern-0.5em 10  \kern-0.5em&\kern-0.5em  3  \kern-0.5em&\kern-0.5em  1   \kern-0.5em&\kern-0.5em 1\\
        {\scriptsize$\dim(\mS)$}  &\kern-0.5em 2740  \kern-0.5em&\kern-0.5em 71 \kern-0.5em&\kern-0.5em  758  \kern-0.5em&\kern-0.5em 1534 \kern-0.5em&\kern-0.5em  515  \kern-0.5em&\kern-0.5em 111 \kern-0.5em&\kern-0.5em  21 \kern-0.5em&\kern-0.5em   5  \kern-0.5em&\kern-0.5em  2  \kern-0.5em&\kern-0.5em  1  \kern-0.5em&\kern-0.5em  0  \kern-0.5em&\kern-0.5em  1  \kern-0.5em&\kern-0.5em  0   \kern-0.5em&\kern-0.5em 0\\
    \end{tabular}}
    \caption{Dimensions of the (self-projecting) realization spaces of rank $4$ matroids on $[9]$ over $\overline{\mathbb{Q}}$.}
    \label{tab:realizations49}
\end{table}}

It was already observed in \cite{GHSV24} that not every realizable self-dual matroid can be realized by a self-dual point configuration. This appeared for the first time for rank $5$ matroids on $10$ elements. When considering self-projecting matroids of rank $k$ on $n$ elements, we obtain a similar behavior. For the first time, within the scope of our computations, this appears for matroids of rank~$4$ on $9$ elements. There are at least $2556$ realizable self-projecting matroids for which $\mS = \emptyset.$ The distribution of the dimensions of the realization spaces of these matroids is displayed in~\Cref{tab:dim_nosp49}. We consider one example in detail.
\begin{table}[H]
    \centering
    \begin{tabular}{c|ccccccc}
         $\dim(\mR)$ &  0 & 1 & 2 & 3 & 4 & 5 & 6   \\\hline
        \# matroids &4 &  103 &  494  & 1089  & 738  & 124  & 4  \\
    \end{tabular}
    \caption{Dimensions of realization spaces $\mR(\mM)$ of self-projecting matroids of rank $4$ on~$[9]$ with~$\mS = \emptyset$.}
    \label{tab:dim_nosp49}
\end{table}

\begin{example}\label{ex : not_sp_realizable}
    Here, we give an example of a realizable, self-projecting matroid, which cannot be realized by a self-projecting point configuration. The matroid is given by the linear dependencies of the columns of the matrix
    \begin{equation*}
     M= \begin{pmatrix}
    1 &  0  & 0 &  0  & \frac{2}{3} &  0   &   1  & 1  & \frac{1}{2}\\
    0 &  1 &  0 &  0  &    0  & 2 &  \frac{1}{2} &  1 &  \frac{1}{2}\\
    0 &  0 &  1 &  0  &    1  & 1 &     1 &  1 &     1\\
    0  & 0 &  0 &  1  &    2  & 2 &     2 &  1 &     1\\
    \end{pmatrix}
    \end{equation*} 
    This matroid has zero-dimensional realization space. The only realization over characteristic $0$, up to $\PGL(4)$, is given by the matrix above. The self-projecting realization space is empty, because the second multi-Veronese matrix $\nu(M)$ has full rank. 
    This example can be found in the database in the file \verb|r_4_n_9_index_5985.mrdi|.
\end{example}

\subsection{The computations, the code and the database}\label{sec : computations}

The proofs of the above theorems are computational. The usage of computational methods and their resulting data in mathematics has been developing quickly within the last decades. In particular, the emphasis on making these data FAIR (Findable, Accessible, Interoperable, Reusable) and on making computations reproducible and transparent increased \cite{MardiWhitePaper}. We believe that our approach addresses the paramount questions and challenges on FAIR data and confirmable workflows.

We first describe the methods and the set-up used in the computations, thus providing the proof for the theorems. Then we describe how the code and output of these computations is made available to the research community, and how future research can profit from the created infrastructure.

\medskip

\begin{proof}[Proof and Discussion of \Cref{thm : realization space rank 3} and \Cref{thm : realization space rank 4}]
We extracted all self-projecting matroids from all rank $k$ matroids on $n$ elements stored in the database \texttt{polyDB}~\cite{polydb:paper} by computationally checking each matroid for the existence of half-coloops.
For the computation of the (self-projecting) realization spaces, we used the methods described in the proofs of \cite[Proposition 3.3 and Theorem~3.7]{GHSV24}, and adapted the code available on GitHub\footnote{\url{https://github.com/sachihashimoto/self-dual}} to our purposes.

The main idea is to use Gröbner basis computations in \texttt{Magma}. For each matroid in the list, we chose an isomorphic matroid for which $[k]$ is a basis. Note that in this setting the matroid is realizable if and only if it can  be realized by a $k \times n$ matrix of the form $X = ( \Id_k | x_{ij} )$, where $\Id_k$ is the identity matrix.
Since we consider the realizations over characteristic zero, the underlying polynomial ring is over $\mathbb{Q}.$
\Cref{algo:R} and \Cref{algo:S} describe the procedure. 

\begin{algorithm}[h] \label{alg:one}
	\caption{Computing the realization space $\mR(\mM)$}
	\label{algo:R}
	\begin{algorithmic}[1]
		\Require{A self-projecting matroid $\mM$ with $[k]$ one of the bases}
		\Ensure{Gr\"obner bases of $\mR(\mM)$, polynomials on which to localize}
        \State $X = ( \Id_k | x_{ij} )$, where
        $(x_{ij})_{i \in [k], j \in [n-k]}\in\text{Mat}(k, n-k)$
        \State $I' = \langle \det((x_{ij})_{i\in[k], j\in I}) | I \text{ nonbasis of }\mM  \rangle \subset \mathbb{Q}[x_{ij} : i \in [k], j \in [n-k]]$
       \Comment{ \textcolor{black!70}{\textit{This sets some of the entries $x_{ij}$ to zero} }}
        \State  Consider the remaining nonzero entries $x_{kl}$ in the $k \times (n-k)$ matrix of unknowns, and put $x_{kl} - 1$ into the ideal $I'$ for the first such nonzero entry in each column and row. 
        \State  $I_{\mR(\mM)} = (I': \prod_{I \text{ basis of }\mM}  \det(X_{ij})_{i\in[k], j\in I} ) ^\infty$ 
        \State \Return Gr\"obner basis of $I_{\mR(\mM)}$ in the localization of the polynomial ring at $\{\det(x_{ij})_{i\in[k], j\in I} |  I \text{ basis of }\mM\}$
	\end{algorithmic}
\end{algorithm}

\begin{algorithm}[h] \label{alg:two}
	\caption{Computing the self-projecting realization space $\mS(\mM)$}
	\label{algo:S}
	\begin{algorithmic}[1]
		\Require{A self-projecting matroid $\mM$ with $[k]$ one of the bases}
		\Ensure{Gr\"obner basis of $\mS(\mM)$, polynomials on which to localize}
        \State $X = ( \Id_k | x_{ij} )$, where 
        $(x_{ij})_{i \in [k], j \in [n-k]}\in\text{Mat}(k, n-k)$
        \State Compute $I_{\mR(\mM)} $ using \Cref{algo:R}.
        \State $J = I_{\mR(\mM)} +  \langle (X\cdot \Lambda \cdot X^t)_{i,j} | i,j \in [k] \rangle$, where $\Lambda = \diag(\lambda_1,\ldots,\lambda_n)$
        \State $J^{sat} = (I: \langle\prod_{i \in [n]}  \lambda_i\rangle ) ^\infty$ 
        \State $J_{\mS(\mM)} =J^{sat}\cap \mathbb{Q}[x_{ij} | i \in [k], j \in [n]]$
        \State $I_{\mS(\mM)} = (J_{\mS(\mM)}: \prod_{I \text{ basis of }\mM}  \det(x_{ij})_{i\in[k], j\in I} ) ^\infty$ 
        \State \Return Gr\"obner basis of $I_{\mS(\mM)}$ in the localization of the polynomial ring at $\{\det(x_{ij})_{i\in[k], j\in I} |  I \text{ basis of }\mM\}$
	\end{algorithmic}
\end{algorithm}

The algorithms were implemented in \texttt{Magma} \texttt{2.27}, parallelized with \texttt{GNU Parallel} \cite{gnu} and executed on a server with 2x 8-Core Intel Xeon Gold 6144 at 3.5 GHz (max: 4.2 GHz) and 768 GB RAM with NVidia RTX 4000 SFF Ada 20GB.

This implementation was successful, and near to instantaneous, for all matroids of rank $2$ on up to $12$ elements, for all rank $3$ matroids on up to $7$ elements and for the rank $4$ matroids on $8$ elements. For rank $3$ on $8$ elements and rank $4$ on $9$ elements the code was slower and for many matroids the computation did not terminate within the given timeout of $360$ seconds. The reason is the growing complexity of the Gr\"obner basis computation and saturation with respect to the $\lambda_i$. To be able to still get results, we use the same optimization trick as employed in \cite{GHSV24} for the computation of the self-dual realization spaces of the self-dual matroids of rank 5: the choice of an isomorphic matroid for $\mM$ with a \emph{frame}, i.e., a circuit $c$ of size $k+1$ containing the basis $[k]$. This is possible for all self-projecting matroids of rank $3$ on $8$ elements and all but two self-projecting matroids of rank $4$ on $9$ elements.
The isomorphism is a relabeling of the ground set of $\mM$ and this impacts the runtime of the calculations. 

Using this optimized code, we were able to compute the self-projecting realization spaces for all but 5 matroids of rank $3$ on $8$ elements with a timeout of $360$ seconds, and one more terminated within a couple of hours, while the last $4$ matroids did not terminate. For the self-projecting matroids of rank $4$ on $9$ elements  $5479$ terminated within $360$ seconds, while another $277$ terminated within $7000$ seconds. In the other cases, we computed $\mR$, which in all cases was nearly instantaneous.
\end{proof}

The code and output files from the $\texttt{Magma}$ computations are available on GitHub\footnote{\label{github}
\url{https://github.com/AlheydisGeiger/selfprojectingGrassmannian}}.
Since $\texttt{Magma}$ is not an open source software, the (self-projecting) realization spaces data will be available on \texttt{OSCAR} \cite{OSCAR,OSCAR-book}, in the newly emerging database collection \texttt{oscarDB}\footnote{\url{https://docs.oscar-system.org/dev/Experimental/OscarDB/introduction/}}.

The advantages of incorporating everything into the infrastructure of \texttt{OSCAR} are manifold: 
the database becomes findable and accessible from any {\tt OSCAR} installation; the large supporting community ensures that its maintenance and the associated code are as reliable as an open-source solution allows; development was aligned with existing implementations for matroid realization spaces within {\tt OSCAR} to maximize user-friendliness; and, finally, future reuse of our results, e.g., for checking irreducibility or smoothness of realization spaces, as well as code improvements as new methods emerge, are straightforward.
For example, from the work for \cite{CL25} there exists code to check smoothness, irreducibility and connectedness of matroid realization spaces using \texttt{OSCAR}. Since  the actual realization spaces were not stored, our code and database collection provide options to confirm their results and open perspectives for future combinations of the functionalities.

The \texttt{OSCAR} code to work with self-projecting realization spaces and to load the database entries is currently available on GitHub\footnote{
\href{https://github.com/AlheydisGeiger/Oscar.jl/tree/ag/selfprojecting_matroids/experimental/MatroidRealizationSpaces/src}{{\tt https://github.com/AlheydisGeiger/Oscar.jl/tree/ag/selfprojecting\_matroids}}} and will in the near future be incorporated into the experimental section of the main \texttt{OSCAR} distribution.  We also wrote some functions to reproduce the results from \texttt{Magma} in \texttt{OSCAR}. Code snippets and examples how to use the code, can be found in the project GitHub repository.

\section{Real and Positive} \label{sec : reality and positivity}

Real and positive versions of Grassmannians and their subvarieties lie at the heart of applications in the sciences. In this section we formulate some properties and questions regarding the real and positive self-projecting Grassmannian. We fix $\KK = \RR$ and omit it from the notation.

\subsection{Real and positive self-projecting Grassmannian} \label{sec : positive sp}

In the recent years, totally non-negative orthogonal Grassmannians appear in applications in particle physics and cosmology~\cite{Galashin_2020,maazouz2024positive,MR2607588}. However, as noted in~\cite[Section 5]{maazouz2024positive}, the structure and topological properties of these semialgebraic sets are not necessarily well-behaved whenever $n>2k$. Even in the $n=2k$ case, the totally positive part of the self-dual Grassmannian does not inherit the good behaviour of the orthogonal case, as noted for $\SGr(3,6)$  in the context of ABCT varieties~\cite[Section 5]{ARS25}. 
Note that since $\SGr(k,n)$ is not a homogeneous variety, we cannot consider its positive part in the sense of Lusztig. Therefore, we do not expect the totally non-negative self-projecting Grassmannian to behave nicely.

Recall that the real self-projecting Grassmannian is given by
\begin{equation} \label{eq : real selfprof}
    \SGr^\bullet_\RR(k,n) = \bigcup_{\lambda \in (\RR^*)^n} \OGr_\RR^\lambda(k,n). 
\end{equation}
The behavior of $\OGr_\RR^\lambda(k,n)$ depends on the choice of $\lambda$. We write $\Lambda_\RR(k,n)$ for the set of $\lambda\in\{\pm1\}^n$ for which $\OGr^\lambda(k,n)\neq \emptyset$. From Witt's theory, we have the following the following characterization of $\Lambda_\RR(k,n)$. 
\begin{proposition}
    Let $\lambda \in (\RR^*)^n$. The real orthogonal Grassmannian $\OGr^\lambda(k,n)$ is non-empty if and only if $s(\lambda)\geq k$, where $s(\lambda)$ is the signature of $\lambda$, i.e. the minimum between the number of positive and negative entries in $\lambda$.
\end{proposition}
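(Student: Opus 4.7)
The plan is to reduce the statement to the classical fact that the Witt index of a nondegenerate symmetric bilinear form over $\RR$ of signature $(p,q)$ equals $\min(p,q)$, and then make the reduction and both directions explicit enough that the argument is self-contained.

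First I would reduce to the case $\lambda \in \{\pm 1\}^n$. Given arbitrary $\lambda \in (\RR^*)^n$, the linear change of coordinates $y_i = \sqrt{|\lambda_i|}\,x_i$ sends $\eta_\lambda$ to $\eta_{\bu}$ where $\bu_i = \sign(\lambda_i) \in \{\pm 1\}$. This is an $\RR$-linear isomorphism of $\RR^n$ and sends isotropic subspaces to isotropic subspaces bijectively, so $\OGr^\lambda_\RR(k,n) \neq \emptyset$ iff $\OGr^{\bu}_\RR(k,n) \neq \emptyset$, and $s(\lambda) = s(\bu) = \min(p,q)$ where $p = |\{i : \bu_i = 1\}|$ and $q = n - p = |\{i : \bu_i = -1\}|$.

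For the ``if'' direction, assume $k \leq \min(p,q)$. After permuting coordinates, I may assume $\bu_1 = \cdots = \bu_p = 1$ and $\bu_{p+1} = \cdots = \bu_n = -1$. Pick $k$ disjoint pairs $(i_\ell, j_\ell)$ with $i_\ell \in [p]$ and $j_\ell \in \{p+1,\ldots,n\}$. Then the vectors $v_\ell = e_{i_\ell} + e_{j_\ell}$ satisfy $\eta_{\bu}(v_\ell, v_m) = 0$ for all $\ell, m$ because $\bu_{i_\ell} + \bu_{j_\ell} = 0$ and the supports are disjoint when $\ell \neq m$. The $v_\ell$ are linearly independent, so $\spanv(v_1, \ldots, v_k)$ is a totally isotropic $k$-dimensional subspace, witnessing $\OGr^{\bu}_\RR(k,n) \neq \emptyset$.

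For the ``only if'' direction, suppose $V \in \OGr^{\bu}_\RR(k,n)$. Decompose $\RR^n = \RR^p \oplus \RR^q$ according to the positive and negative coordinates, and let $\pi_+ : \RR^n \to \RR^p$ denote the projection onto the positive part. I claim $\pi_+|_V$ is injective. Indeed, if $v \in V$ satisfies $\pi_+(v) = 0$, then $v$ lies in $\RR^q$, so $\eta_{\bu}(v,v) = -\|v\|^2$, where $\|\cdot\|$ is the standard Euclidean norm on the negative block. Isotropy of $v$ forces $\|v\| = 0$, hence $v = 0$. Therefore $k = \dim V \leq \dim \RR^p = p$. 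The symmetric argument with $\pi_-$ yields $k \leq q$, so $k \leq \min(p,q) = s(\bu) = s(\lambda)$.

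The only step requiring any care is the injectivity of $\pi_+$; everything else is a direct construction or bookkeeping. I do not expect a genuine obstacle here — this is essentially Witt's theorem over $\RR$ specialized to a diagonal form, and the explicit hyperbolic-pair construction in the ``if'' direction also verifies that the maximal isotropic dimension equals $\min(p,q)$ without invoking Witt's theorem as a black box.
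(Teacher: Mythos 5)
Your proof is correct and is exactly the argument the paper has in mind: the paper simply invokes ``Witt's theory'' without spelling out details, and what you have written is the standard computation that the Witt index of a real diagonal form of signature $(p,q)$ equals $\min(p,q)$, with the explicit hyperbolic pairs $e_{i_\ell}+e_{j_\ell}$ for the lower bound and injectivity of the projection onto the positive (resp.\ negative) block for the upper bound. Both the rescaling reduction to $\lambda\in\{\pm1\}^n$ and the two directions check out, so your write-up is a complete, self-contained version of the cited fact.
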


Recall that the totally non-negative (resp. totally positive) Grassmannian $\Gr_{\geq 0}(k,n)$ (resp. $\Gr_{>0}(k,n)$) is the semialgebraic set defined as the intersection of $\Gr(k,n)$ in the dual Pl\"ucker embedding with the non-negative (resp. positive) orthant of $\PP^{\binom{n}{k}-1}$. Not all non-empty real orthogonal Grassmannians intersect the totally positive part. 

\begin{proposition}\label{prop : lambda positive}
    Suppose $V\in \Gr_{>0}(k,n)$  with $V\in \OGr^{\lambda}(k,n)$ for some $\lambda \in \Lambda_\RR(k,n)$. Then there is $S\in \binom{[n]}{2k}$, such that $\lambda|_S = \pm((-1)^i)_{i\in [2k]}$. 
\end{proposition}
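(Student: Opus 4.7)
The plan is to recast the isotropy condition as a subspace inclusion and then apply the classical sign-variation bounds for totally positive subspaces.

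Since $V\cdot\diag(\lambda)\cdot V^t=0$ is equivalent to $V\cdot\diag(\lambda)\subseteq V^\perp$, every $v\in V$ gives $\lambda\cdot v:=(\lambda_1v_1,\ldots,\lambda_nv_n)\in V^\perp$. Two classical consequences of Gantmacher--Krein's sign-variation theorem for totally positive subspaces (as they appear in Postnikov's work on the totally non-negative Grassmannian) will be used.
\emph{(a)} Since $V\in\Gr_{>0}(k,n)$, every nonzero $v\in V$ with no zero entries has at most $k-1$ strict sign changes, and conversely every sign pattern in $\{\pm 1\}^n$ with at most $k-1$ sign changes is realized by such a $v\in V$.
\emph{(b)} The Plücker coordinates of $V^\perp$ agree with those of $V$ up to the alternating twist, so $V^\perp\cdot\diag(((-1)^i)_{i\in[n]})\in\Gr_{>0}(n-k,n)$; since the alternating twist interchanges sign-changes with sign-non-changes at each adjacent pair, every nonzero $w\in V^\perp$ with no zero entries has at least $k$ strict sign changes.

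Write $m$ for the number of sign changes of $\lambda$ and $A_\lambda\subseteq[n-1]$ for its sign-change set. For $v\in V$ with no zero entries, the sign-change set of $\lambda\cdot v$ is the symmetric difference $A_\lambda\triangle A_v$. Using \emph{(a)} (together with a generic perturbation to avoid zero coordinates), choose $v\in V$ with $A_v\subseteq A_\lambda$ of size $\min(m,k-1)$, so that $\lambda\cdot v\in V^\perp$ has sign-change set $A_\lambda\setminus A_v$ of cardinality $m-\min(m,k-1)$. Combining with \emph{(b)} yields $m-\min(m,k-1)\geq k$. If $m\leq k-1$ this reads $0\geq k$, which is impossible; hence $m\geq k$, and the inequality becomes $m-(k-1)\geq k$, i.e. $m\geq 2k-1$.

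A $\pm 1$-sequence with at least $2k-1$ sign changes admits an alternating subsequence of length $2k$: simply select one representative from each of the (at least $2k$) maximal constant-sign runs. The resulting indices $s_1<\cdots<s_{2k}$ furnish the required $S\in\binom{[n]}{2k}$ with $\lambda|_S=\pm((-1)^i)_{i\in[2k]}$. The main obstacle is assembling the sign-variation dictionary between $V$ and $V^\perp$ accurately, together with the realization half of \emph{(a)} for topes of the uniform positroid; once these are in hand the rest is combinatorial bookkeeping. A minor technical point is to ensure that both $v$ and $\lambda\cdot v$ have no zero coordinates, which can be arranged by a generic perturbation within the same sign-change stratum, or by working throughout with the upper sign-variation $\varbar$.
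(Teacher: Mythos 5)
Your proof is correct, but it takes a genuinely different route from the paper's. The paper argues by contradiction directly from the quadratic isotropy relations in Pl\"ucker coordinates: assuming $\lambda$ changes sign fewer than $2k-1$ times, it packs all sign-change positions into two $(k-1)$-sets $S,T$ and shows that in the relation $\sum_i\lambda_i\sign(S,i)\sign(T,i)q_{S\cup i}(V)q_{T\cup i}(V)=0$ every surviving term has the same sign, which is impossible for $V\in\Gr_{>0}(k,n)$. You instead read the isotropy condition as the inclusion $\lambda\cdot V\subseteq V^\perp$ and invoke the sign-variation dictionary for totally positive subspaces: $\var(v)\le k-1$ on $V$, $\var(w)\ge k$ on $V^\perp$ via the alternating twist, and the symmetric-difference rule for sign-change sets under coordinatewise multiplication. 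Cancelling $\min(m,k-1)$ of $\lambda$'s sign changes against a suitable tope of $V$ forces $\var(\lambda)\ge 2k-1$, which is exactly the existence of the alternating subsequence of length $2k$. The two external inputs you rely on --- that the topes of $V\in\Gr_{>0}(k,n)$ are precisely the sign patterns with at most $k-1$ sign changes (the alternating oriented matroid), and that $V^\perp$ twisted by $((-1)^{i})_i$ is again totally positive --- are standard but should be cited precisely (Gantmacher--Krein; Karp's work on sign variation and the Grassmannian) rather than gestured at. Your approach buys a sharper intermediate statement, $\var(\lambda)\ge 2k-1$, and would extend to $\Gr_{\ge 0}(k,n)$ with $\var$ replaced by $\varbar$; the paper's argument is more self-contained, using only the defining equations and positivity of the Pl\"ucker coordinates. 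One small simplification: since $\lambda\in(\RR^*)^n$ and your chosen $v$ is a tope, $\lambda\cdot v$ automatically has full support, so the generic perturbation you mention is unnecessary.
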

\begin{proof}
    Let $V\in \Gr_{>0}(k,n)$ with $V\in \OGr^{\lambda}(k,n)$ for some $\lambda\in \Lambda_\RR(k,n)$. Let $i_1,\ldots,i_r$ be such that $\lambda$ changes sign at positions $i_1,\ldots,i_r$. Assume by contradiction that $r<2k-1$. If $r < 2k-2$ define $i_{r+1},\ldots,i_{2k-2}$ to be 
    equal to the last $2k-2-r$ elements of $[n]\setminus\{i_1,\ldots,i_r\}$. Let $S=\{i_1,\ldots,i_{k-1}\}$ and $T=\{i_{k},\ldots,i_{2k-2}\}$, we claim that in Equation \eqref{eq : defining equation plucker} corresponding to this choice of $S,T\in \binom{[n]}{k-1}$, all the non-zero coefficients have the same sign. Since the Pl\"ucker coordinates of $V$ are positive by assumption, this implies that Equation~\eqref{eq : defining equation plucker} does not vanish. This contradicts $V\in \OGr^{\lambda}(k,n)$.

    Let $i\in [n]\setminus (S\cup T)$. If $i<i_1$, then $\lambda_i\cdot \sign(S,i)\cdot\sign(T,i) = 1$. Suppose $i_1<i<i_2$, then both $\lambda_i$ and $\sign(S,i)$ will change sign. Therefore, the product remains equal to $1$. Proceeding this way it is possible to show that $\lambda_i\cdot \sign(S,i)\cdot\sign(T,i) = 1$ for all $i\in [n]\setminus (S\cup T)$.
\end{proof}

It is possible to restrict the self-projecting Grassmannian to the totally non-negative Grassmannian and obtain a notion of total non-negativity in this case. At the same time, by using Equation~\eqref{eq : real selfprof} one can consider the union of all $\OGr^\lambda_{\geq 0}(k,n)$ for all $\lambda\in \Lambda_\RR(k,n)$.
\begin{question}
    What is the best way to define the totally non-negative self-projecting Grassmannian and how do they relate? Can one show that
    \[
    \SGr_{\RR}(k,n)\cap \PP^{\binom{n}{k}-1}_{\geq 0} = \overline{\bigcup_{\lambda\in \Lambda_\RR(k,n)}\OGr^\lambda_{\geq 0}(k,n)} ?
    \]
    Given the current interest in the possibility of defining a positive orthogonal Grassmannian according to Lusztig, we leave it to future research to determine whether it is possible to construct a positive self-projecting Grassmannian from the union of these.
\end{question}

\subsection{Self-projecting positroids} \label{sec : sp positroids}

Recall that a positroid is a matroid that can be realized by a point in the totally non-negative Grassmannian $\Gr_{\geq 0}(k,n)$. Self-dual positroids where studied in~\cite{Galashin_2020} and correspond to fixed point-free involutions. A more general study of positroid cells inside the orthogonal Grassmannian was initiated in~\cite{maazouz2024positive}. In particular, we recall and generalize their notion of orthopositroids as follows.

\begin{definition}
    Given a positroid $\mM$ on $[n]$ of rank $k$, with bases $\mB$, and $\lambda\in \Lambda_\RR(k,n)$, consider for all $S,T \in \binom{[n]}{k-1}$ the sets
    \[ A^{\pm,\lambda}_{S,T} = \{ \ell \in [n] \mid S\cup \ell, T\cup \ell \in \mB  \text{ and } \lambda_\ell\sign(S,\ell)\sign(T,\ell) = \pm 1 \}. \]
    We say that $\mM$ is an {\em orthopositroid with respect to $\lambda$} if for every $S,T\in \binom{[n]}{k-1}$ we have $A^{+,\lambda}_{S,T} = \emptyset$ if and only if $A^{-,\lambda}_{S,T}= \emptyset$.
\end{definition}

Clearly, in order for any positroid to be realizable by an isotropic totally non-negative space it has to be an orthopositroid. Self-projecting positroids are in general a larger class than orthopositroids with respect to some $\lambda$. The number of isomorphism classes of positroids, self-projecting positroids and orthopositroids with respect to some $\lambda\in \Lambda_\RR(k,n)$ is recorded in~\Cref{tab:positroids}. The numbers were computed in {\tt sagemath} and the code is available at the GitHub repository\footref{github}.
Since every rank $2$ matroid is isomorphic to a positroid, we omit the numerical results for $k=2$, which are equal to the ones in~\Cref{tab:matroids}.
\begin{table}[h]
    \centering
    \begin{tabular}{c|| c|c|c || c|c|c || c|c|c ||}
        $\raisebox{-0.2cm}{$n$}\backslash k$ & \multicolumn{3}{c||}{3} & \multicolumn{3}{c||}{4}& \multicolumn{3}{c||}{5} \\
          & \kern-0.15em{\scriptsize$\begin{matrix}\text{pos.}\end{matrix}$}\kern-0.15em &\kern-0.3em {\scriptsize$\begin{matrix} \text{self-proj.}\\ \text{pos.}\end{matrix}$} \kern-0.3em &\kern-0.15em  {\scriptsize$\begin{matrix} \text{orth.}\end{matrix}$}& {\scriptsize$\begin{matrix}\text{pos.}\end{matrix}$} \kern-0.15em & \kern-0.3em {\scriptsize$\begin{matrix} \text{self-proj.}\\ \text{pos.}\end{matrix}$}\kern-0.3em & \kern-0.15em {\scriptsize$\begin{matrix} \text{orth.}\end{matrix}$}\kern-0.15em & {\scriptsize$\begin{matrix}\text{pos.}\end{matrix}$}\kern-0.15em &\kern-0.3em {\scriptsize$\begin{matrix} \text{self-proj.}\\ \text{pos.}\end{matrix}$} \kern-0.3em& \kern-0.15em {\scriptsize$\begin{matrix} \text{orth.}\end{matrix}$} \kern-0.15em\\
         \hline
         6 & 8 & 2 & 2 & & & & & & \\
         7 & 13 & 5 & 5 & & & & & & \\
         8 & 23 & 13 & 13 & 124 & 6 & 6 & & & \\
         9 & 38 & 26 & 26 & 408 & 30 & 29 & & & \\
         10 & 64 & 50 & 50 & 1301 & 200 & 200 & 5270 & 19 & 19
    \end{tabular}
    \caption{Number of simple positroids, self-projecting positroids and orthopositroids for small $(k,n)$ up to isomorphism.}
    \label{tab:positroids}
\end{table}

\begin{remark}
    For $(k,n) = (4,9)$, there exists one self-projecting positroid which is not an orthopositroid with respect to any choice of $\lambda\in \Lambda_\RR(4,9)$. The positroid $\mM$ has non-bases $\{1,2,3,4\},\{4,5,6,7\},\{1,7,8,9\}$. In particular this implies that there is no totally non-negative self-projecting realization of this positroid. That $\mM$ is not an orthopositroid follows from the fact that orthopositroids of type $(k,2k+1)$ have a $2k$-subset $S$ of the ground set such that the restriction $\mM_S$ is a self-dual positroid. The computation for the realization space of this positroid did not terminate, but it is possible to find self-projecting vector spaces representing the positroid, i.e. $\mS_\mM\neq \emptyset$.

    It is interesting to notice that for $k=4$ and $n=10$, we recover the equality between self-projecting positroids and orthopositroids.
\end{remark}

\bibliographystyle{alpha}
\bibliography{selfproj}

@article{GHSV24,
    author = {Geiger, Alheydis and Hashimoto,Sachi and Sturmfels, Bernd and Vlad,Raluca},
    title = {Self-Dual Matroids from Canonical Curves},
    journal = {Exp. Math.},
    volume = {33},
    number = {4},
    pages = {701--722},
    year = {2024},
}

@article{Galashin_2020,
   title={Ising model and the positive orthogonal Grassmannian},
   volume={169},
   number={10},
   journal={Duke Math. J.},
   publisher={Duke University Press},
   author={Galashin, Pavel and Pylyavskyy, Pavlo},
   year={2020},
}

@article{maazouz2024positive,
  title={The positive orthogonal Grassmannian},
  author={Maazouz, Yassine El and Mandelshtam, Yelena},
  journal={Matematiche (Catania)},
  year={2025},
  volume = {1}
}

@article{traves2024ten,
  title={When do Ten Points Lie on a Quadric Surface?},
  author={Traves, Will},
  journal={arXiv:2412.05678},
  year={2024}
}

@article{devriendt2025two,
  title={The Two Lives of the Grassmannian},
  author={Devriendt, Karel and Friedman, Hannah and Reinke, Bernhard and Sturmfels, Bernd},
  journal={Acta Univ. Sapientiae Math.},
  volume={17},
  number={1},
  year={2025},
  publisher={Springer}
}

@article{chan2021moduli,
  title={Moduli spaces of curves: classical and tropical},
  author={Chan, Melody},
  journal={Notices Amer. Math. Soc.},
  volume={68},
  number={10},
  year={2021}
}

@book{dolgachev1988point,
  title={Point sets in projective spaces and theta functions},
  author={Dolgachev, Igor V and Ortland, David},
  volume={165},
  year={1988},
  publisher={Soci{\'e}t{\'e} math{\'e}matique de France Marseille, France}
}

@article{caminata2023determinantal,
    AUTHOR = {Caminata, Alessio and Moon, Han-Bom and Schaffler, Luca},
     TITLE = {Determinantal varieties from point configurations on
              hypersurfaces},
   JOURNAL = {Int. Math. Res. Not. IMRN},
      YEAR = {2023},
    NUMBER = {22},
}

@article {petrakiev,
    AUTHOR = {Petrakiev, Ivan},
     TITLE = {On self-associated sets of points in small projective spaces},
   JOURNAL = {Comm. Algebra},
    VOLUME = {37},
      YEAR = {2009},
    NUMBER = {2},
}

@article {coble,
    AUTHOR = {Coble, Arthur B.},
     TITLE = {Associated sets of points},
   JOURNAL = {Trans. Amer. Math. Soc.},
    VOLUME = {24},
      YEAR = {1922},
    NUMBER = {1},
}

@article {GM,
    AUTHOR = {Gelfand, Izrail' Moiseevich and MacPherson, Robert D.},
     TITLE = {Geometry in {G}rassmannians and a generalization of the
              dilogarithm},
   JOURNAL = {Adv. Math.},
    VOLUME = {44},
      YEAR = {1982},
    NUMBER = {3},
}

@article{CL25,
      title={Singular matroid realization spaces}, 
      author={Daniel Corey and Dante Luber},
      year={2023},
      journal={arXiv:2307.11915}
}

@article {BIMM12,
    AUTHOR = {Matsumoto, Yoshitake and Moriyama, Sonoko and Imai, Hiroshi
              and Bremner, David},
     TITLE = {Matroid enumeration for incidence geometry},
   JOURNAL = {Discrete Comput. Geom.},
    VOLUME = {47},
      YEAR = {2012},
    NUMBER = {1},
}

@incollection {polydb:paper,
    AUTHOR = {Paffenholz, Andreas},
     TITLE = {\texttt{polyDB}: a database for polytopes and related objects},
 BOOKTITLE = {Algorithmic and experimental methods in algebra, geometry, and
              number theory},   
 PUBLISHER = {Springer, Cham},
      YEAR = {2017},          
   MRCLASS = {52-04 (52B20)},

}

@incollection {fairfileformat,
    AUTHOR = {Della Vecchia, Antony and Joswig, Michael and Lorenz,
              Benjamin},
     TITLE = {A {FAIR} file format for mathematical software},
 BOOKTITLE = {Mathematical software---{ICMS} 2024},
    SERIES = {Lecture Notes in Comput. Sci.},
    VOLUME = {14749},
 PUBLISHER = {Springer, Cham},
      YEAR = {[2024] \copyright 2024},
}

@article {MR2607588,
    AUTHOR = {Lasenby, Anthony},
     TITLE = {Grassmann, geometric algebra and cosmology},
   JOURNAL = {Ann. Phys. (Berlin)},
    VOLUME = {19},
      YEAR = {2010},
    NUMBER = {3-5},
}

@incollection{kapranov93chow,
     author = {Kapranov, Mikhail M.},
     title = {Chow quotients of {Grassmannians}. {I}},
     booktitle = {I. M. Gelfand seminar. Part 2: Papers of the Gelfand seminar in functional analysis held at Moscow University, Russia, September 1993},
     year = {1993},
     publisher = {Providence, RI: American Mathematical Society},
}

@book{white1988implementation,
    title={Implementation of the straightening algorithm of classical invariant theory},
    author={White, Neil},
    year={1988},
    publisher={Institute for Mathematics and its Applications (USA)}
}

@misc{integersequence,
    title={The on-line encyclopedia of integer sequences},
    author={Sloane, Neil JA and others},
    year={2003}
}

@article{eisenbud2000projective,
    title={The projective geometry of the Gale transform},
    author={Eisenbud, David and Popescu, Sorin},
    journal={J. Algebra},
    volume={230},
    number={1},
    year={2000},
    publisher={Elsevier}
}

@article{huang2014abjm,
    title={{A}{B}{J}{M} amplitudes and the positive orthogonal grassmannian},
    author={Huang, Yu-tin and Wen, CongKao},
    journal={J. High Energy Phys.},
    volume={2014},
    number={2},
    year={2014},
    publisher={Springer}
}

@article{huang2014positive,
    title={The positive orthogonal Grassmannian and loop amplitudes of ABJM},
    author={Huang, Yu-Tin and Wen, Congkao and Xie, Dan},
    journal={J. Phys. A: Math. Theor.},
    volume={47},
    number={47},
    pages={474008},
    year={2014},
    publisher={IOP Publishing}
}

@misc{OSCAR,
    key          = {OSCAR},
    organization = {The OSCAR Team},
    title        = {O{SCAR} -- {O}pen {S}ource {C}omputer {A}lgebra {R}esearch system, {V}ersion 1.5.0},
    year         = {2025},
    url          = {https://www.oscar-system.org},
}

@book{OSCAR-book,
    editor = {Decker, Wolfram and Eder, Christian and Fieker, Claus and Horn, Max and Joswig, Michael},
    title = {The {C}omputer {A}lgebra {S}ystem {OSCAR}: {A}lgorithms and {E}xamples},
    year = {2025},
    publisher = {Springer},
    series = {Algorithms and {C}omputation in {M}athematics},
    volume = {32},
    edition = {1},
    url = {https://link.springer.com/book/9783031621260},
    issn = {1431-1550},
    doi = {10.1007/978-3-031-62127-7},
}

@article{Mardi,
    title={Research-data management planning in the German mathematical community},
    author={Boege, Tobias and Fritze, Ren{\'e} and G{\"o}rgen, Christiane and Hanselman, Jeroen and Iglezakis, Dorothea and Kastner, Lars and Koprucki, Thomas and Krause, Tabea H and Lehrenfeld, Christoph and Polla, Silvia and others},
    journal={Eur. Math. Soc. Mag.},
    number={130},
    pages={40--47},
    year={2023}
}

@misc{mnev,
    author = {Mnev, Nicolai E.},
    title = {The universality theorems on the classification problem of configuration varieties and convex polytopes varieties},
    year = {1988},
    howpublished = {Topology and geometry, {Rohlin} {Semin}. 1984-1986, {Lect}. {Notes} {Math}. 1346, 527-543 (1988).},
}

@article {ARS25,
    AUTHOR = {Daniele Agostini and Lakshmi Ramesh and Dawei Shen},
     TITLE = {Points on rational normal curves and the {ABCT} variety},
   JOURNAL = {Matematiche (Catania)},
    VOLUME = {80},
      YEAR = {2025},
    NUMBER = {1},
}

@book {MS,
    AUTHOR = {Maclagan, Diane and Sturmfels, Bernd},
     TITLE = {Introduction to tropical geometry},
    SERIES = {Graduate Studies in Mathematics},
    VOLUME = {161},
 PUBLISHER = {American Mathematical Society, Providence, RI},
      YEAR = {2015},
     PAGES = {xii+363},
      ISBN = {978-0-8218-5198-2},
   MRCLASS = {14T05 (05B35 14M25 15A80 52B70)},
  MRNUMBER = {3287221},
MRREVIEWER = {Patrick\ Popescu-Pampu},
       DOI = {10.1090/gsm/161},
       URL = {https://doi.org/10.1090/gsm/161},
}

@article {KT,
    AUTHOR = {Keel, Sean and Tevelev, Jenia},
     TITLE = {Geometry of {C}how quotients of {G}rassmannians},
   JOURNAL = {Duke Math. J.},
  FJOURNAL = {Duke Mathematical Journal},
    VOLUME = {134},
      YEAR = {2006},
    NUMBER = {2},
     PAGES = {259--311},
      ISSN = {0012-7094,1547-7398},
   MRCLASS = {14E30 (14D06 14D20)},
  MRNUMBER = {2248832},
MRREVIEWER = {Michael\ A.\ van Opstall},
       DOI = {10.1215/S0012-7094-06-13422-1},
       URL = {https://doi.org/10.1215/S0012-7094-06-13422-1},
}

@article{arkani2011unification,
  title={Unification of residues and Grassmannian dualities},
  author={Arkani-Hamed, Nima and Bourjaily, Jacob and Cachazo, Freddy and Trnka, Jaroslav},
  journal={J. High Energy Phys.},
  volume={2011},
  number={1},
  year={2011},
  publisher={Springer}
}

@misc{MardiWhitePaper,
      title={Research-Data Management Planning in the German Mathematical Community}, 
      author={Tobias Boege and René Fritze and Christiane Görgen and Jeroen Hanselman and Dorothea Iglezakis and Lars Kastner and Thomas Koprucki and Tabea Krause and Christoph Lehrenfeld and Silvia Polla and Marco Reidelbach and Christian Riedel and Jens Saak and Björn Schembera and Karsten Tabelow and Marcus Weber},
      year={2022},
      eprint={2211.12071},
      archivePrefix={arXiv},
      primaryClass={math.HO},
      url={https://arxiv.org/abs/2211.12071}, 
}

@misc{gnu,
author={Tange, Ole },
title =  {GNU Parallel },
year ={2018}, 
note = {doi:10.5281/zenodo.1146014.},
}

\end{document}